\let\bf\mathbf
\let\fr\frac
\let\eps\varepsilon
\newcommand{\One}{\mathds{1}}
\newcommand{\aplus}{a}
\newcommand{\astart}{a}
\newcommand{\aend}{b}
\newcommand{\spline}[1]{S^{#1}_{\mathcal{A}}}
\newcommand{\supnorm}[3]{\sup_{#1 \in #2} \left|#3 \right| }
\newcommand{\dto}{\, \mathrm{d}}
\newcommand{\der}[2]{\frac{\dto #1}{\dto #2}}
\newcommand{\hilfder}[3]{{_#1}\mathrm{D}_{#2}^{#3}}
\newcommand{\rlint}[3]{{_#1}\mathrm{I}_{#2}^{#3}}
\newcommand{\splint}[3]{{_#1^q}\Phi_{#2}^{#3}}
\newcommand{\Lim}[1]{\raisebox{0.5ex}{\scalebox{0.8}{$\displaystyle \lim_{#1}\;$}}}
\newtheorem{theorem}{Theorem}[section]
\newtheorem{lemma}{Lemma}[section]
\newtheorem{proposition}{Proposition}[section]
\theoremstyle{definition}
\newtheorem{definition}{Definition}[section]
\theoremstyle{remark}
\newtheorem{remark}{Remark}[section]
\newcommand{\rlder}[3]{{_#1}\mathrm{D}_{#2}^{#3}}
\newcommand{\weighnorm}[1]{\|#1\|_{C_{1-\gamma}[0,T]}}
\newcommand{\epsweighnorm}[1]{\|#1\|_{C_{1-\gamma}[\eps,T]}}
\newcommand{\Cnorm}[3]{\|#3\|_{C[#1, #2]}}
\newcommand{\Rbb}{\mathbb{R}}
\newcommand{\supnum}[1]{{(#1)}}
\title{A numerical Bernstein splines approach for nonlinear initial value problems with Hilfer fractional derivative} 
\date{\today}
\author[1,2]{Niels Goedegebure\footnote{\url{goed0001@e.ntu.edu.sg}}}
\author[1]{Kateryna Marynets\footnote{\url{k.marynets@tudelft.nl} (corresponding author)}}
\affil[1]{\small Delft Institute of Applied Mathematics, Faculty of Electrical Engineering, Mathematics and Computer Science, Delft University of Technology. Mekelweg 4, 2628CD Delft, The Netherlands
}
\affil[2]{\small Division of Mathematical Sciences, School of Physical and Mathematical Sciences, Nanyang Technological University, 21 Nanyang Link, Singapore 637371, Singapore\footnote{Present address}
}
\begin{document}

\maketitle
\begin{abstract}
{\normalsize \noindent \centering
The Hilfer fractional derivative generalizes and interpolates between the commonly used Riemann-Liouville and Caputo fractional derivative.
In general, solutions to Hilfer fractional derivative initial value problems are singular for $t\downarrow t_0$, providing a challenge in numerical approximation.
In this work, we present a Bernstein splines method to obtain accurate approximations to solutions of nonlinear Hilfer fractional derivative initial value problems, including solutions with singular behavior.
We provide explicit convergence requirements and asymptotic convergence rates to analytical solutions.
Moreover, the method is efficiently implemented, using a vectorized approach and parallelization in polynomial order.
Numerical experiments show empirical convergence corresponding to analytical results and a significantly higher accuracy compared to the commonly used fractional Adams–Bashforth–Moulton predictor-corrector method.
Furthermore, our method is able to simulate nonlinear systems with Hilfer fractional derivatives, as demonstrated by the application to the fractional Van der Pol oscillator.
}
\end{abstract}

\noindent
\textbf{Keywords:}
fractional initial value problems%
,
splines%
,
numerical approximation%
,
Hilfer fractional derivative%
,
fractional Van der Pol oscillator

\section{Introduction}
\label{sec:intro}
Fractional calculus offers  extension of classical differential and integral operators to non-integer orders of differentiation and integration.
Particularly of interest are fractional derivatives, generalizing $\alpha$-order differentiation for $\alpha \in \mathbb{R}_{>0}$.
Fractional differential equations (FDEs) involving these fractional derivatives have gained considerable interest from applications in control theory \cite{Podlubny1999control, Calderon2006control, Caponetto2010} and physical systems \cite{Hilfer2000, Hilfer2002, METZLER20001}.
A general challenge for the modeling and interpretation of FDEs is the choice of initial conditions.
For many common choices of fractional differential operators, FDEs have non-integer, fractional order initial values, as discussed in e.g. \cite{Podlubny1999}.
In the Hilfer fractional differential operator of order $\alpha$ and type $\beta \in [0,1]$, the choice of operator and initial conditions is parametrized by interpolating between two commonly used fractional differential operators: Riemann-Liouville for $\beta = 0$ and Caputo for $\beta = 1$ \cite{Hilfer2000}.
These two operators and their properties are extensively discussed in for instance \cite{Podlubny1999, Kilbas2006}.
The Caputo derivative operator yields integer order initial conditions, whereas the Riemann-Liouville operator and all other choices of $\beta \in (0,1)$ result in fractional order initial conditions.

In this paper, we consider the following nonlinear fractional initial value problem (FIVP):
\begin{align}
    \begin{cases}
        \hilfder{0}{t}{{\alpha}, {\beta}}
        \bf{y}(t) 
        =
        \bf{f}(t, \bf{y}(t)),
        \quad &0 < t \leq T,
        \quad 
        0< \alpha <1,
        \quad 
        0\leq \beta \leq 1,
        \\
        \rlint{0}{t}{1-{\gamma}} \bf{y}(0)
        =
        \bf{\tilde{y}}_0,
        \quad &\gamma = \alpha + \beta - \alpha \beta,
        \label{eq:ivp}
    \end{cases}
\end{align}
where $\bf{y}: (0, T] \to \mathbb{R}^d$ are the unknown vector-valued solution functions for initial values $\bf{\tilde{y}}_0 \in \mathbb{R}^d$ and $\bf{f}: (0, T] \times \mathbb{R}^d \to \mathbb{R}^d $ is a given vector-valued function, in general nonlinear with respect to both of its arguments.
Here, $\rlder{0}{t}{\alpha, \beta}$ denotes the Hilfer fractional derivative of order  $\alpha$ and type $\beta$ as introduced in \cite{Hilfer2000} and
$\rlint{0}{t}{1-\gamma}$ denotes the fractional Riemann-Liouville integral. 
In this paper, operations $\rlint{0}{t}{\alpha}$, $\hilfder{0}{t}{\alpha, \beta}$, $| \cdot |$ and $\leq$ are applied component-wise.

One of the known results concerning analysis of the FIVP \eqref{eq:ivp} is of \citet{Furati2012}, where the authors present conditions for existence and uniqueness of solutions to the studied problem.
For the general Hilfer derivative case of $\beta \not= 1$ with nonhomogeneous initial value $\bf{\tilde{y}}_0 \not  = \bf{0}$, these solutions are noncontinuous and singular as $t \downarrow 0$. 
This poses challenges in implementation and convergence for the numerical approximations to solutions compared to the Caputo case of $\beta = 1$, for which numerous commonplace established methods exist \cite{Diethelm2002, Samko1987, charef1992}.

A numerical-analytical operational method for solutions of the linear case of \eqref{eq:ivp} was presented by \citet{Hilfer2009operational}.
More recently, \citet{Admon2023} propose a method for nonautonomous linear Hilfer-derivative relaxation-oscillations, applying integration on both sides of \eqref{eq:ivp} and taking an operational matrix approach with Legendre functions.
Their method shows results for nonzero initial conditions. 
More recent work on nonlinear equations include the work by \citet{George2023}, obtaining \textit{mild} solutions of \eqref{eq:ivp}.
These solutions however, are restricted to the space of continuously differentiable solutions.
Moreover, the presented numerical examples only show results for the nonsingular case of $\tilde{\bf{y}}_0 = \bf{0}$.
A shifted Legendre-Galerkin approach applied to delay equations is taken by \citet{Sweis2023}, showing convergence under the assumptions of $L^2$-solutions, limiting the applications for more strongly singular problems.
Moreover, no demonstrated singular solution behavior is provided.
Under similar assumptions, \citet{Shloof2022} present a Legendre operational matrix approach for a variation of \eqref{eq:ivp} using the Hilfer \textit{fractal-fractional} operator.
In the recent paper by \citet{Muthuselvan2025}, a Legendre wavelet approach is taken for Hilfer-derivative FIVP's applied to delay equations.
Convergence of the method relies on continuity of solutions, and numerical examples are only shown for the continuous, nonsingular case of homogeneous initial values.

To the best of our knowledge, no efficient method with theoretical error bounds and convergence results exists in literature to obtain the full range of - possibly singular - solutions of \eqref{eq:ivp}.
Therefore, in this paper, we present a numerical method to obtain these solutions, building on the analysis of the author's thesis work \cite{Goedegebure2025thesis}.
Here, a Bernstein splines method is used, inspired by the recent work of \cite{Satmari2022,Bica2023,Satmari2023}, where only the Caputo case $(\beta=1)$ is considered.
Moreover, in comparison to these results, our method as presented in this paper requires less stringent convergence assumptions, removing constraints on time domain interval size $T$ by using local knot-for-knot iterations.
To obtain accurate approximations to the singular solutions of \eqref{eq:ivp} for the general Hilfer case $(0\leq \beta <1$), we apply a time-domain transformation of solutions using a {weighted function space}-approach as in the $C_{1-\gamma}$ space used by \citet{Furati2012}, together with a shifted time domain transformation resulting in solutions starting from $\eps > 0$ small.
Our method provides accurate approximations with asymptotic convergence rates under relaxed convergence assumptions for a wide range of nonlinear problems.
Additionally, we present a vectorized and parallelized implementation.

The outline of the paper is as follows. 
In Section \ref{sec:prob}, we provide preliminary results used in the rest of the paper.
We first present existing notation and results on fractional calculus in Section~\ref{sec:fr_calc}, establishing existence and uniqueness to analytical solutions of~\eqref{eq:ivp}.
In Section \ref{sec:prelim}, we present preliminary Bernstein polynomial and spline approximation results.
In Section \ref{sec:spline_conv}, the main convergence results on Hilfer Bernstein splines for FIVP's are presented in Theorem~\ref{thm:hilf_splines} and \ref{thm:convergence_rates}, with a detailed numerical implementation in Section \ref{sec:implementation}.
Numerical simulation examples and results are presented in Section \ref{sec:num_res}.
We show empirical convergence for a test problem in Section \ref{sec:num_res_conv} and a comparison with the Adams–Bashforth–Moulton predictor-corrector method of \citet{Diethelm2002} in Section~\ref{sec:num_res_diethelm}.
Finally, we present a nonlinear example in Section~\ref{sec:num_res_vdp}, showing simulations for the Hilfer-fractional Van der Pol oscillator, a system with applications in control theory and electronic circuits as studied in e.g. \cite{Barbosa2007, Chen2008,Liu2016, Liu2016vibcont}.
Conclusions, discussion of results and avenues for future work are presented in Section~\ref{sec:conclusion}.
\section{Preliminary definitions and results}
\label{sec:prob}
\subsection{Fractional calculus}
\label{sec:fr_calc}
\begin{definition}[Riemann-Liouville fractional integral, \cite{Samko1987}]\label{def:rl_int}
For $\alpha \geq 0$ and $a \leq t$, define the Riemann-Liouville fractional integral as:
\begin{align}
    \rlint{a}{t}{\alpha} f(t)
    =
    {\frac {1}{\Gamma(\alpha)}}\int_{a}^{t}\left(t-s\right)^{\alpha-1}f(s)\,\mathrm {d} s.
\end{align}
Furthermore, we use the notational convention $\rlint{a}{t}{\alpha} f(a) := \Lim{t\downarrow a} \rlint{a}{t}{\alpha} f(t)$.
\end{definition}
This operator has a number of useful properties: 
\begin{proposition}[Semigroup property of fractional integration, \cite{Samko1987}]
    \label{prop:rlint_semigroup}
    For $\alpha, \beta > 0$ and $a\leq t\leq b$, if $f \in L^p[a, b]$ with $1\leq p \leq \infty$, we have:
    \begin{align}
    \label{eq:semigroupeq}
        (
        \rlint{a}{t}{\alpha}
        \rlint{a}{t}{\beta}
        )
        f(t)
        =
        \rlint{a}{t}{\alpha + \beta}
        f(t),
    \end{align}
    almost everywhere.
    Furthermore, if $\alpha+\beta>1$, equality \eqref{eq:semigroupeq} holds pointwise.
\end{proposition}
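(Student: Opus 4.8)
The plan is to reduce the identity to an application of Fubini's theorem followed by the Euler Beta-integral evaluation. First I would write out the left-hand side explicitly:
\begin{align}
(\rlint{a}{t}{\alpha} \rlint{a}{t}{\beta}) f(t)
=
\frac{1}{\Gamma(\alpha)\Gamma(\beta)}
\int_a^t (t-s)^{\alpha-1}
\left(
\int_a^s (s-r)^{\beta-1} f(r)\,\mathrm{d}r
\right)
\mathrm{d}s.
\end{align}
The next step is to justify interchanging the order of integration. Since $f \in L^p[a,b]$ and the kernels $(t-s)^{\alpha-1}$, $(s-r)^{\beta-1}$ are in $L^1$ on the relevant triangular domain $\{(r,s): a \le r \le s \le t\}$ for $\alpha,\beta > 0$, an application of the Fubini--Tonelli theorem (checking absolute integrability first, via Hölder if $p < \infty$, or trivially if $p = \infty$) lets me swap to obtain
\begin{align}
\frac{1}{\Gamma(\alpha)\Gamma(\beta)}
\int_a^t f(r)
\left(
\int_r^t (t-s)^{\alpha-1}(s-r)^{\beta-1}\,\mathrm{d}s
\right)
\mathrm{d}r.
\end{align}

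Then I would evaluate the inner $s$-integral by the substitution $s = r + (t-r)u$, $u \in [0,1]$, which turns it into $(t-r)^{\alpha+\beta-1}\int_0^1 (1-u)^{\alpha-1}u^{\beta-1}\,\mathrm{d}u = (t-r)^{\alpha+\beta-1}\mathrm{B}(\beta,\alpha)$, and use the classical identity $\mathrm{B}(\beta,\alpha) = \Gamma(\alpha)\Gamma(\beta)/\Gamma(\alpha+\beta)$. Substituting back collapses the expression to $\frac{1}{\Gamma(\alpha+\beta)}\int_a^t (t-r)^{\alpha+\beta-1} f(r)\,\mathrm{d}r = \rlint{a}{t}{\alpha+\beta} f(t)$, which is the claim. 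The "almost everywhere" qualifier is exactly what Fubini delivers in the $L^p$ setting, since the swap of integrals is valid for almost every $t$.

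For the final sentence — that equality holds pointwise when $\alpha + \beta > 1$ — the idea is that in this regime the kernel $(t-r)^{\alpha+\beta-1}$ of the combined operator is bounded (indeed continuous) on $[a,b]$, so $\rlint{a}{t}{\alpha+\beta}f$ is a genuine continuous function of $t$; similarly one checks the iterated operator on the left produces a continuous function of $t$ (the inner integral $\rlint{a}{s}{\beta}f$ lies in an $L^{p'}$ space with a better exponent, and convolving again with $(t-s)^{\alpha-1}$ regularizes it). Two continuous functions agreeing almost everywhere agree everywhere, giving the pointwise statement. The main obstacle I anticipate is not the Beta-integral computation, which is routine, but the careful bookkeeping of integrability exponents needed to license Fubini uniformly (and the continuity argument for the pointwise case) across the full range $1 \le p \le \infty$ — in particular handling the endpoint $p = 1$ where the Hölder estimate on the kernels is tightest. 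Since these are standard facts about the Riemann--Liouville integral, I would likely cite \cite{Samko1987} for the measure-theoretic details rather than reproduce them in full.
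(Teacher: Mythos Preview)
Your argument is correct and is precisely the classical proof: Fubini on the triangular region followed by the Euler Beta evaluation via the substitution $s = r + (t-r)u$, with the pointwise upgrade coming from continuity of both sides when $\alpha+\beta>1$. The paper itself does not supply a proof of this proposition at all --- it simply states the result and cites \cite{Samko1987} --- so there is nothing to compare against beyond noting that what you have sketched is exactly the standard argument found in that reference.
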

The following boundedness and continuity result will be useful for further results.
\begin{proposition}[$L^p$ boundedness in $f$, \cite{Samko1987}] \label{prop:rl_bdd} 
The Riemann-Liouville integral is bounded in $L^p[a,b]$ for $f \in L^p[a,b]$, with $1\leq p \leq \infty $, satisfying:
\begin{align}
    ||\rlint{a}{t}{\alpha}f||_{L^p[a,b]}
    \leq 
    \fr{(b-a)^\alpha}{\Gamma(\alpha+1)}
    ||f||_{L^p[a,b]}.
\end{align}
By linearity of integration, this result also provides continuity in $L^p[a,b]$ with regards to $f$.
\end{proposition}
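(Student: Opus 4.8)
The plan is to view $\rlint{a}{t}{\alpha} f$ as the convolution of $f$, extended by zero outside $[a,b]$, with the integrable kernel $u \mapsto u^{\alpha-1}/\Gamma(\alpha)$ on $(0,b-a]$, and to estimate its norm via Minkowski's integral inequality. First I would change variables $r = t-s$ in Definition~\ref{def:rl_int} to obtain, for $t \in [a,b]$,
\[
\rlint{a}{t}{\alpha} f(t)
=
\frac{1}{\Gamma(\alpha)} \int_0^{t-a} r^{\alpha-1} f(t-r)\, \mathrm{d}r
=
\frac{1}{\Gamma(\alpha)} \int_0^{b-a} r^{\alpha-1} f(t-r)\, \mathbf{1}_{[a+r,\,b]}(t)\, \mathrm{d}r,
\]
which exhibits the operator as an integral, over the fixed variable $r \in (0,b-a]$, of (truncated) translates of $f$.

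For $1 \le p < \infty$, I would then apply Minkowski's integral inequality to this representation, pulling the $L^p[a,b]$-norm in the variable $t$ inside the $\mathrm{d}r$-integral. The only computation needed is that, for each fixed $r \in (0,b-a]$, the substitution $u = t-r$ gives
\[
\left\| r^{\alpha-1} f(t-r)\, \mathbf{1}_{[a+r,\,b]}(t) \right\|_{L^p[a,b]}
=
r^{\alpha-1} \left( \int_a^{b-r} |f(u)|^p\, \mathrm{d}u \right)^{1/p}
\le
r^{\alpha-1}\, \|f\|_{L^p[a,b]}.
\]
Substituting this bound leaves the deterministic integral $\frac{1}{\Gamma(\alpha)} \int_0^{b-a} r^{\alpha-1}\, \mathrm{d}r = \frac{(b-a)^\alpha}{\alpha\,\Gamma(\alpha)} = \frac{(b-a)^\alpha}{\Gamma(\alpha+1)}$, which is exactly the claimed constant; this also shows that the integral defining $\rlint{a}{t}{\alpha} f(t)$ converges for almost every $t$, so the operator is well defined on $L^p[a,b]$.

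The endpoint $p = \infty$ is handled directly: bounding $|f(s)| \le \|f\|_{L^\infty[a,b]}$ inside the defining integral gives, for every $t \in [a,b]$,
\[
\left| \rlint{a}{t}{\alpha} f(t) \right|
\le
\frac{\|f\|_{L^\infty[a,b]}}{\Gamma(\alpha)} \int_a^t (t-s)^{\alpha-1}\, \mathrm{d}s
=
\frac{(t-a)^\alpha}{\Gamma(\alpha+1)}\, \|f\|_{L^\infty[a,b]}
\le
\frac{(b-a)^\alpha}{\Gamma(\alpha+1)}\, \|f\|_{L^\infty[a,b]}.
\]
Finally, the asserted continuity in $f$ follows at once from linearity of $\rlint{a}{t}{\alpha}$ together with the norm bound applied to $f-g$. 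I do not expect any genuine difficulty here; the one place to be careful is the application of Minkowski's integral inequality (equivalently, Fubini–Tonelli on the $(r,t)$-domain), namely keeping the truncation $\mathbf{1}_{[a+r,\,b]}(t)$ correct so that $f$ is never evaluated outside $[a,b]$.
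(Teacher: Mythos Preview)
Your argument is correct: the change of variables to convolution form followed by Minkowski's integral inequality for $1\le p<\infty$, and the direct pointwise bound for $p=\infty$, yield exactly the stated constant $(b-a)^\alpha/\Gamma(\alpha+1)$, and the truncation $\mathbf{1}_{[a+r,b]}(t)$ is handled properly. Note that the paper does not supply its own proof of this proposition---it is quoted as a preliminary result from \cite{Samko1987}---but your approach is in fact the standard one given there (their ``generalized Minkowski inequality'' argument), so there is nothing to contrast.
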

We introduce two special functions:
\begin{definition}[Beta function, \cite{Samko1987}]
\label{def:intro_beta}
For $a, b> 0$, denote the (complete) beta function as:
\begin{align}
    \mathrm{B}(a,b) 
    =
    \int_0^1
    \vartheta^{a-1}(1-\vartheta)^{b-1}\dto \vartheta.
\end{align}
\end{definition}
\begin{definition}[Incomplete beta function, \cite{Osborn1968}]
\label{def:intro_beta_incomplete}
For $a, b> 0$ and $0\leq z \leq 1$, denote the incomplete beta function as:
\begin{align}
    \mathrm{B}_z(a,b) 
    =
    \int_0^z
    \vartheta^{a-1}(1-\vartheta)^{b-1}\dto \vartheta.
\end{align}
\end{definition}
We can now use the beta functions as introduced above to explicitly calculate the fractional integral of a locally supported monomial.
\begin{proposition}[Fractional integral of monomials]
    \label{prop:int_pol_inc_0_t}
    For $\alpha >0$, $k > -1$ and $0\leq b\leq t$, the $\alpha$-fractional integral on $[0,b]$ of a monomial of degree $k$ is given by
    \begin{align}
    \rlint{0}{b}{\alpha}
    t^k
    =
    \fr{t^{\alpha+k}}{\Gamma(\alpha)}
    \mathrm{B}_{b/t}(k+1, \alpha).
    \end{align}
\end{proposition}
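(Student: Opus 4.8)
The plan is to unfold the definition of the Riemann--Liouville integral acting on the locally supported monomial and then reduce the resulting finite-interval integral to the incomplete beta function via a single rescaling of the integration variable. First I would read the notation $\rlint{0}{b}{\alpha} t^k$ as the operator $\rlint{0}{t}{\alpha}$ of Definition~\ref{def:rl_int} applied to the function $s \mapsto s^k \One_{[0,b]}(s)$ and then evaluated at a point $t \ge b$; this gives
\begin{align}
\rlint{0}{b}{\alpha} t^k = \frac{1}{\Gamma(\alpha)} \int_0^t (t-s)^{\alpha-1} s^k \One_{[0,b]}(s)\,\dto s = \frac{1}{\Gamma(\alpha)} \int_0^b (t-s)^{\alpha-1} s^k \,\dto s.
\end{align}

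Next, using $0 \le b \le t$, I would substitute $s = t\vartheta$, so that $\dto s = t\,\dto\vartheta$ and the interval $[0,b]$ is mapped onto $[0, b/t] \subseteq [0,1]$; pulling the powers of $t$ out of the integrand then leaves
\begin{align}
\rlint{0}{b}{\alpha} t^k = \frac{t^{\alpha+k}}{\Gamma(\alpha)} \int_0^{b/t} \vartheta^k (1-\vartheta)^{\alpha-1}\,\dto\vartheta,
\end{align}
and the remaining integral is exactly $\mathrm{B}_{b/t}(k+1,\alpha)$ by Definition~\ref{def:intro_beta_incomplete} (first argument $k+1$, second argument $\alpha$, upper limit $z = b/t$), which is the claimed identity.

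Finally I would record the integrability bookkeeping that justifies the manipulations: the integrand $\vartheta^k(1-\vartheta)^{\alpha-1}$ is integrable on $[0, b/t]$ because $k > -1$ controls the behavior at $\vartheta = 0$ and $\alpha > 0$ controls the behavior at $\vartheta = 1$, the latter endpoint being attained only in the boundary case $b = t$, for which $\mathrm{B}_{b/t}(k+1,\alpha)$ degenerates into the complete beta function $\mathrm{B}(k+1,\alpha)$. I do not expect any genuine obstacle here; the only point requiring care is keeping track of the transformed domain $[0, b/t]$ rather than all of $[0,1]$, since that is precisely what makes the incomplete beta function --- rather than the complete one --- appear in the statement.
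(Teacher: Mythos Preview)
Your proof is correct and follows essentially the same approach as the paper: both start from $\frac{1}{\Gamma(\alpha)}\int_0^b (t-s)^{\alpha-1}s^k\,\dto s$, apply the rescaling $s = t\vartheta$ (the paper writes $z := s/t$), and identify the resulting integral as $\mathrm{B}_{b/t}(k+1,\alpha)$. Your additional remarks on the interpretation of the notation and the integrability at the endpoints are sound but go slightly beyond what the paper records.
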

\begin{proof}
Substituting $z:=\fr{s}{t}$ gives:
    \begin{align}
        \fr{1}{\Gamma(\alpha)}
    \int_0^b(t-s)^{\alpha-1}s^k\dto s
    =
    \fr{t^{\alpha+k}}{\Gamma(\alpha)}
    \int_0^{b/t}(1-z)^{\alpha-1}z^k\dto z
    =
    \fr{t^{\alpha+k}}{\Gamma(\alpha)}
    \mathrm{B}_{b/t}(k+1, \alpha).
    \end{align}
\end{proof}
\begin{proposition}[Fractional integral of monomial with right-local support]
    \label{prop:int_pol_inc_t_T}
    For $\alpha >0$, $k > -1$ and $0\leq b\leq t$, the $\alpha$-fractional integral on $[b, t]$ of a monomial of degree $k$ is given by
    \begin{align}
    \rlint{b}{t}{\alpha}
    t^k
    =
    \fr{t^{\alpha+k}}{\Gamma(\alpha)}
    \mathrm{B}_{1-b/t}(\alpha, k+1).
    \end{align}
\end{proposition}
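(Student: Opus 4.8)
The plan is to evaluate the defining integral directly by a single change of variables, in close parallel with the proof of Proposition~\ref{prop:int_pol_inc_0_t}. By Definition~\ref{def:rl_int},
\[
\rlint{b}{t}{\alpha} t^k = \fr{1}{\Gamma(\alpha)}\int_b^t (t-s)^{\alpha-1} s^k \dto s,
\]
so everything reduces to identifying this integral with an incomplete beta function. The natural substitution here is $z := 1 - s/t$ (rather than $z := s/t$, which was the right choice for the left-local case): it sends $s = b$ to $z = 1 - b/t$ and $s = t$ to $z = 0$, turns $(t-s)^{\alpha-1}$ into $(tz)^{\alpha-1}$ and $s^k$ into $t^k(1-z)^k$, with $\dto s = -t\dto z$. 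Collecting the powers of $t$ and reversing the limits of integration yields $\fr{t^{\alpha+k}}{\Gamma(\alpha)}\int_0^{1-b/t} z^{\alpha-1}(1-z)^k\dto z$, which is exactly $\fr{t^{\alpha+k}}{\Gamma(\alpha)}\,\mathrm{B}_{1-b/t}(\alpha, k+1)$ by Definition~\ref{def:intro_beta_incomplete}.

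An alternative route, for anyone who would rather not redo the substitution, is to split the integral as $\int_b^t = \int_0^t - \int_0^b$ and apply Proposition~\ref{prop:int_pol_inc_0_t} twice (using $\mathrm{B}_1 = \mathrm{B}$ for the $\int_0^t$ piece), obtaining $\fr{t^{\alpha+k}}{\Gamma(\alpha)}\bigl(\mathrm{B}(k+1,\alpha) - \mathrm{B}_{b/t}(k+1,\alpha)\bigr)$; the claim then follows from the complement identity $\mathrm{B}(p,q) - \mathrm{B}_z(p,q) = \mathrm{B}_{1-z}(q,p)$ for the incomplete beta function, which itself is a one-line substitution $u = 1-\vartheta$. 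Either way, convergence of the integral at $s = t$ is guaranteed by $\alpha > 0$ and (only relevant in the split form) at $s = 0$ by $k > -1$, exactly as in Proposition~\ref{prop:int_pol_inc_0_t}.

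There is essentially no hard step here; the only thing to watch is the orientation of the new variable, since $z = 1 - s/t$ is orientation-reversing and the sign from $\dto s = -t\dto z$ must be absorbed by swapping the limits back to $0 \to 1-b/t$. The statement is simply the mirror image of Proposition~\ref{prop:int_pol_inc_0_t} with the roles of $\alpha$ and $k+1$ in the incomplete beta function interchanged, reflecting that the integrable singularity now sits at the upper endpoint of the interval of integration rather than at the origin.
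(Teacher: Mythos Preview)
Your proof is correct and essentially the same as the paper's: the paper uses two substitutions, first $\varphi := t-s$ and then $z := \varphi/t$, whose composition is exactly your single substitution $z = 1 - s/t$. Your alternative splitting route via the complement identity is a nice extra observation not in the paper, but the primary argument matches.
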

\begin{proof}\
First, using the substitution $\varphi := t-s$ gives
    \begin{align}
    \rlint{b}{t}{\alpha}
    t^k
    =
    \fr{1}{\Gamma(\alpha)}
    \int_b^t (t-s)^{\alpha-1}s^k\dto s
    =
    \fr{1}{\Gamma(\alpha)}
    \int_0^{t-b} \varphi^{\alpha-1}
    (t-\varphi)^k\dto \varphi.
    \end{align}
Now, as in Proposition \ref{prop:int_pol_inc_0_t}, use the substitution $z:=\fr{\varphi}{t}$, providing
\begin{align}
    \fr{t^{\alpha+k}}{\Gamma(\alpha)}
    \int_0^{1-b/t} z^{\alpha-1}(1-z)^k\dto z
    =
    \fr{t^{\alpha+k}}{\Gamma(\alpha)}
    \mathrm{B}_{1-b/t}(\alpha, k+1).
\end{align}
\end{proof}
We establish fractional differentiation using the generalized Hilfer derivative.
\begin{definition}[Hilfer fractional derivative, \cite{Hilfer2000}]
    \label{def:intro_hilfder}
    The Hilfer fractional derivative of order $0< \alpha <1$ and type $0 \ \leq \beta \leq 1$ for $t > 0 $ is given as
    \begin{align}
        \hilfder{0}{t}{\alpha, \beta}
        =
        \rlint{0}{t}{\beta(1-\alpha)}
        \der{ }{t}
        \rlint{0}{t}{(1-\beta)(1-\alpha)},
    \end{align}
    where $\rlint{0}{t}{\,}$ denotes the left-sided Riemann-Liouville fractional integral as defined in Definition \ref{def:rl_int}.
\end{definition}
\begin{remark}[Interpolation of Caputo and RL-derivative, \cite{Furati2012}]
    The Hilfer fractional derivative interpolates two of the most commonly used fractional derivative operators:
    \begin{align}
        \hilfder{0}{t}{\alpha, \beta}
        =
        \begin{cases}
            \der{}{t}
            \rlint{0}{t}{1-\alpha}  
            ,
            \quad
            \text{for}
            \quad
            \beta = 0,
            \quad
            \text{(Riemann-Liouville derivative)},
            \\
            \rlint{0}{t}{1-\alpha}
            \der{}{t}
            ,
            \quad
            \text{for}
            \quad
            \beta = 1,
            \quad
            \text{(Caputo derivative)}.
        \end{cases}
    \end{align}
\end{remark}
To obtain solutions to Hilfer fractional derivative FDEs, \citet{Furati2012} introduce the following space of weighted continuous functions
\[
C_{1-\gamma}[\astart, \aend]
=
\{  f: (\astart, \aend] \to \mathbb{R} : (t-a)^{1-\gamma}  f \in C[\astart, \aend] \}.
\]
This space is a Banach space under the corresponding norm
\begin{align}
    \| f \|_{C_{1-\gamma}[a,b]}
    =
    \sup_{t\in (a,b]}
    |(t-a)^{1-\gamma}f(t)|.
\end{align}
Moreover, the authors introduce the following fractionally differentiable subspace:
    \begin{align}
        C^{\zeta}_{1-\gamma}[\astart, \aend]
        =
        \{  f \in C_{1-\gamma}[\astart, \aend]
        , \, \hilfder{\aplus}{t}{\zeta, 0} f(t) \in C_{1-\gamma}[\astart, \aend] \}.
    \end{align}
For IVP \eqref{eq:ivp}, the following existence and uniqueness result holds:
\begin{theorem}[Existence and uniqueness \cite{Furati2012}]
\label{thm:ex_un}
    If $\bf{f}:(\astart, \aend] \times \mathbb{R}^d \to \mathbb{R}^d$ satisfies:
    \begin{enumerate}
        \item  $\bf{f}(\cdot, \bf{y}(\cdot)) \in C^{{\beta}(1-{\alpha})}_{1-{\gamma}}[\astart, \aend]$ for any $\bf{y}\in C_{1-{\gamma}}[\astart, \aend]$,
        \item $\bf{f}$ is Lipschitz in its second argument. 
        For some nonnegative matrix $K \in \mathbb{R}_{\geq 0}^{d\times d}$:
        \begin{align}
            |\bf{f}(t, \bf{y}_1(t)) - \bf{f}(t, \bf{y}_2(t))|
            \leq 
            K |\bf{y}_1(t) - \bf{y}_2(t)|,
        \end{align}
        for all $t\in (\astart, \aend]$ and $\bf{y}_1(t), \bf{y}_2(t) \in G\subset \mathbb{R}^d$. 
    \end{enumerate}
    Then, there exists a unique solution $\bf{y} \in C_{1-\gamma}^\gamma[\astart, \aend]$ to IVP \eqref{eq:ivp}.
    Furthermore, $\bf{y}$ is a solution of \eqref{eq:ivp} if and only if it satisfies the equivalent integral equation:
    \begin{align}
        \label{eq:eq_int_eq}
        \bf{y}(t)
        =
        \fr{\bf{\tilde{y}}_0}
        {\Gamma(\gamma)}(t-\astart)^{\gamma-1}
        +
        \rlint{\astart}{t}{\alpha}\bf{f}(t, \bf{y}(t)), \quad t > \astart.
    \end{align}
\end{theorem}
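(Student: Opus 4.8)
The plan is to recast IVP~\eqref{eq:ivp} as the problem of finding a fixed point, on the Banach space $C_{1-\gamma}[a,b]$, of the integral operator $\mathcal{T}$ given by
\[
\{\mathcal{T}\bf{y}\}(t)
=
\fr{\bf{\tilde{y}}_0}{\Gamma(\gamma)}(t-a)^{\gamma-1}
+
\rlint{a}{t}{\alpha}\bf{f}(t,\bf{y}(t)),
\]
and then to invoke an iterated, vector-valued form of the Banach contraction principle. The argument splits into three parts: (i) proving that~\eqref{eq:ivp} and the integral equation~\eqref{eq:eq_int_eq} have exactly the same solutions in the relevant class, so that it suffices to study fixed points of $\mathcal{T}$; (ii) proving that $\mathcal{T}$ is a well-defined self-map of $C_{1-\gamma}[a,b]$ which is eventually a contraction, giving a unique fixed point $\bf{y}\in C_{1-\gamma}[a,b]$; and (iii) upgrading the regularity of that fixed point from $C_{1-\gamma}[a,b]$ to $C^\gamma_{1-\gamma}[a,b]$.

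For part (i) I would apply $\rlint{a}{t}{\alpha}$ to both sides of the differential equation. On the left, Definition~\ref{def:intro_hilfder}, the semigroup property of fractional integration (Proposition~\ref{prop:rlint_semigroup}) and the arithmetic identities $\alpha+\beta(1-\alpha)=\gamma$ and $(1-\beta)(1-\alpha)=1-\gamma$ collapse the composition to $\rlint{a}{t}{\gamma}\der{}{t}\rlint{a}{t}{1-\gamma}\bf{y}(t)$; a standard composition identity for Riemann--Liouville operators (see~\cite{Samko1987}) then reduces this to $\bf{y}(t)-\frac{\rlint{a}{t}{1-\gamma}\bf{y}(a)}{\Gamma(\gamma)}(t-a)^{\gamma-1}$, and substituting the initial condition $\rlint{a}{t}{1-\gamma}\bf{y}(a)=\bf{\tilde{y}}_0$ yields~\eqref{eq:eq_int_eq}. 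Conversely, applying $\hilfder{a}{t}{\alpha,\beta}$ to~\eqref{eq:eq_int_eq} --- noting $\hilfder{a}{t}{\alpha,\beta}(t-a)^{\gamma-1}=0$ --- recovers the differential equation, and applying $\rlint{a}{t}{1-\gamma}$ and letting $t\downarrow a$ recovers the initial datum. The delicate point is bookkeeping: the semigroup and composition identities must be applied in function classes where they hold pointwise rather than merely almost everywhere, and assumption~1 (that $\bf{f}(\cdot,\bf{y}(\cdot))\in C^{\beta(1-\alpha)}_{1-\gamma}[a,b]$) is precisely what legitimizes these manipulations.

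For part (ii), let $\bf{y}\in C_{1-\gamma}[a,b]$. By assumption~1, $\bf{f}(\cdot,\bf{y}(\cdot))\in C_{1-\gamma}[a,b]$, so $|\bf{f}(t,\bf{y}(t))|\le \|\bf{f}(\cdot,\bf{y}(\cdot))\|_{C_{1-\gamma}[a,b]}(t-a)^{\gamma-1}$ componentwise; since Proposition~\ref{prop:int_pol_inc_0_t} (applied to a translated monomial, with $b=t$ and exponent $\gamma-1$) gives $\rlint{a}{t}{\alpha}(t-a)^{\gamma-1}=\frac{\Gamma(\gamma)}{\Gamma(\gamma+\alpha)}(t-a)^{\gamma+\alpha-1}$, monotonicity of the Riemann--Liouville integral yields
\[
(t-a)^{1-\gamma}\bigl|\rlint{a}{t}{\alpha}\bf{f}(t,\bf{y}(t))\bigr|
\le
\fr{\Gamma(\gamma)}{\Gamma(\gamma+\alpha)}(t-a)^{\alpha}\,\|\bf{f}(\cdot,\bf{y}(\cdot))\|_{C_{1-\gamma}[a,b]},
\]
which extends continuously to $t=a$; together with the explicit first term this shows $\mathcal{T}\bf{y}\in C_{1-\gamma}[a,b]$. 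The identical computation applied to $\bf{f}(t,\bf{y}_1(t))-\bf{f}(t,\bf{y}_2(t))$, combined with the Lipschitz bound of assumption~2, gives $\|\mathcal{T}\bf{y}_1-\mathcal{T}\bf{y}_2\|_{C_{1-\gamma}[a,b]}\le \frac{\Gamma(\gamma)(b-a)^\alpha}{\Gamma(\gamma+\alpha)}\,\|K\|\,\|\bf{y}_1-\bf{y}_2\|_{C_{1-\gamma}[a,b]}$ (restricting, if $G\neq\mathbb{R}^d$, to a closed subset of $C_{1-\gamma}[a,b]$ with range in $G$ that $\mathcal{T}$ leaves invariant for $b-a$ small). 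This is a genuine contraction when $b-a$ is small; for general $b-a$ I would iterate and track the telescoping constants, obtaining $\|\mathcal{T}^n\bf{y}_1-\mathcal{T}^n\bf{y}_2\|_{C_{1-\gamma}[a,b]}\le \frac{\Gamma(\gamma)(b-a)^{n\alpha}}{\Gamma(n\alpha+\gamma)}\,\|K\|^n\,\|\bf{y}_1-\bf{y}_2\|_{C_{1-\gamma}[a,b]}$, whose coefficients are summable in $n$ because of the $\Gamma(n\alpha+\gamma)$ in the denominator; the Weissinger generalization of the Banach fixed point theorem then gives a unique fixed point. (Equivalently, one may partition $[a,b]$ into finitely many short subintervals and glue solutions knot by knot, matching the strategy used later in the numerical scheme.)

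For part (iii), I would substitute the fixed point back into~\eqref{eq:eq_int_eq}: applying $\rlint{a}{t}{1-\gamma}$ and using $1-\gamma+\alpha=1-\beta(1-\alpha)$ gives $\rlint{a}{t}{1-\gamma}\bf{y}(t)=\bf{\tilde{y}}_0+\rlint{a}{t}{1-\beta(1-\alpha)}\bf{f}(t,\bf{y}(t))$, whence $\hilfder{a}{t}{\gamma,0}\bf{y}(t)=\der{}{t}\rlint{a}{t}{1-\beta(1-\alpha)}\bf{f}(t,\bf{y}(t))=\hilfder{a}{t}{\beta(1-\alpha),0}\bf{f}(t,\bf{y}(t))$, which belongs to $C_{1-\gamma}[a,b]$ exactly by assumption~1; hence $\bf{y}\in C^\gamma_{1-\gamma}[a,b]$. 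I expect part (i) to be the main obstacle: making the composition-of-fractional-operators identities hold pointwise, and throughout the whole proof keeping precise track of which weighted space every intermediate expression lives in, so that each application of Propositions~\ref{prop:rlint_semigroup} and~\ref{prop:int_pol_inc_0_t} and of the differentiation identities is justified.
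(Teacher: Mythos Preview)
Your proposal is correct and in fact considerably more detailed than what the paper provides: the paper does not prove this theorem at all but simply cites Theorems~23 and~25 of \cite{Furati2012} and remarks that the vector-valued case follows by applying those results component-wise. Your three-part sketch --- equivalence of the IVP with the integral equation via the semigroup property and composition identities, a Weissinger-type fixed-point argument on $C_{1-\gamma}[a,b]$ using the Lipschitz bound, and a regularity upgrade to $C^\gamma_{1-\gamma}[a,b]$ via assumption~1 --- is precisely the strategy of \cite{Furati2012}, so you have essentially reconstructed the cited proof rather than diverged from it.
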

\begin{proof}
    The proof follows from Theorem 23 and 25 of \citet{Furati2012}, applied component-wise for vector-valued functions in $\mathbb{R}^d$.
\end{proof}
\subsection{Polynomial splines approximation}
\label{sec:prelim}
\label{sec:bs_splines}
\begin{definition}[Bernstein polynomial operator \cite{Lorentz1953}]
\label{def:bernstein_operator}
    For a function $f:[a,b]\to \mathbb{R}$, the Bernstein operator of polynomial order $q\in \mathbb{N}$ is given as:
    \begin{align}
        B^q
        f(t)
        =
        \sum_{j=0}^q
        f
        \left(
        a
        +
        {j(b-a)}/{q}
        \right)
        \binom{q}{j}
        \left(
        \fr{t-a}{b-a}
        \right)^j
        \left(
        \fr{b-t}{b-a}
        \right)^{q-j}
        .
    \end{align}
\end{definition}
\begin{remark}[Bernstein operator at endpoints]
\label{rem:bernstein_endpoints}
    For a function $f:[a,b]\to \mathbb{R}$,
    direct calculation using Definition \ref{def:bernstein_operator} gives:
    \begin{align}
        B^q f(a) = f(a),
        \quad\mathrm{ and }\quad
        B^q f(b) = f(b).
    \end{align}
\end{remark}
\begin{definition}[Modulus of continuity]
\label{def:mod_cont}
    For a continuous function $f:[a,b]\to \mathbb{R}$, the modulus of continuity $\omega$ is defined as:
    \begin{align}
        \omega\,(f;\delta)
        =
        \sup_{|t-s|\leq \delta} |f(t)-f(s)|
        ,
        \quad 
        \textrm{ for } 
        t, s \in [a,b].
    \end{align}
\end{definition}

\begin{theorem}[Bernstein operator error bound, \cite{Lorentz1953}]
\label{thm:bernstein_pol_error}
For a continuous function $f:[a,b]\to\mathbb{R}$, the following error estimate holds:
\begin{align}
    |f(t) - B^qf(t)|
    \leq
    \fr{5}{4}\,
    \omega \left(f; 
    \fr{(b-a)}{\sqrt{q}}
    \right), 
    \quad \textrm{for all }
    t\in[a,b].
\end{align}
\end{theorem}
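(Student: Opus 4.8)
The plan is to reduce to the unit interval and then run the classical Popoviciu-type argument based on the partition of unity and the second-moment identity for the Bernstein basis polynomials. First I would normalise: via the affine substitution $u = (t-a)/(b-a)$ and $g(u) = f(a + u(b-a))$, one checks directly from Definition~\ref{def:bernstein_operator} that $B^q f(t) = B^q g(u)$ with the right-hand side the Bernstein operator on $[0,1]$, and from Definition~\ref{def:mod_cont} that the modulus of continuity rescales as $\omega(g;\delta) = \omega(f;(b-a)\delta)$. Hence it suffices to prove $|g(u) - B^q g(u)| \le \tfrac{5}{4}\,\omega(g;q^{-1/2})$ for all $u \in [0,1]$; taking $\delta = q^{-1/2}$ then gives $\omega(g;\delta) = \omega(f;(b-a)/\sqrt{q})$, which is the claimed bound.

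Writing $p_{q,j}(u) = \binom{q}{j}u^j(1-u)^{q-j}$, the two structural ingredients are the partition of unity $\sum_{j=0}^q p_{q,j}(u) = 1$ and the second-moment identity $\sum_{j=0}^q (u - j/q)^2\, p_{q,j}(u) = u(1-u)/q$; both follow from the binomial theorem (the second by differentiating $\sum_j p_{q,j}(u) = 1$ twice in $u$, equivalently from the variance of a $\mathrm{Bin}(q,u)$ random variable). Using the partition of unity, $g(u) - B^q g(u) = \sum_j \bigl(g(u) - g(j/q)\bigr)\,p_{q,j}(u)$, so $|g(u) - B^q g(u)| \le \sum_j \omega\bigl(g; |u - j/q|\bigr)\,p_{q,j}(u)$.

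The key elementary lemma I need is the quadratic growth bound $\omega(g;\lambda\delta) \le (1 + \lambda^2)\,\omega(g;\delta)$ for every $\lambda \ge 0$: this follows from subadditivity of the modulus of continuity, $\omega(g;\lambda\delta) \le \lceil\lambda\rceil\,\omega(g;\delta)$, together with $\lceil\lambda\rceil \le 1 + \lambda^2$ valid for all $\lambda \ge 0$ (trivial for $\lambda \le 1$; for $\lambda > 1$ use $\lceil\lambda\rceil \le \lambda + 1 \le \lambda^2 + 1$). Applying this with $\delta = q^{-1/2}$ and $\lambda = \sqrt{q}\,|u - j/q|$ gives $\omega(g;|u-j/q|) \le \bigl(1 + q(u-j/q)^2\bigr)\,\omega(g;q^{-1/2})$. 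Substituting and using the two identities, $|g(u) - B^q g(u)| \le \omega(g;q^{-1/2})\bigl(1 + q\cdot u(1-u)/q\bigr) = \omega(g;q^{-1/2})\,(1 + u(1-u)) \le \tfrac{5}{4}\,\omega(g;q^{-1/2})$, the last step since $u(1-u) \le \tfrac{1}{4}$ on $[0,1]$.

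The main obstacle — essentially the only non-routine point — is the choice of scalar inequality controlling $\omega(g;|u-j/q|)$: the cruder bound $\omega(g;\lambda\delta) \le (1+\lambda)\,\omega(g;\delta)$ combined with Cauchy–Schwarz on $\sum_j |u - j/q|\,p_{q,j}(u)$ only yields the constant $\tfrac{3}{2}$, whereas the quadratic bound $1 + \lambda^2$ meshes exactly with the second-moment identity and the estimate $u(1-u) \le \tfrac14$ to produce the sharper constant $\tfrac{5}{4}$. Everything else (the affine reduction and the two binomial identities) is routine bookkeeping.
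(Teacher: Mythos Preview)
Your argument is correct and is essentially the classical Popoviciu-type proof as found in Lorentz (1953), which the paper simply cites without reproducing a proof. There is nothing to compare against: the paper treats this as a known result from the literature, and your write-up supplies exactly that standard derivation.
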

To obtain a polynomial approximation for longer time intervals without increasing order $q$, a splines approach is taken.
We first formalize the subdivision of the time domain:
\begin{definition}[Knot collection]
    Given a closed interval $[a,b] \subset \mathbb{R}$, we define a knot collection $\mathcal{A}$ of size $k+1$ as the collection of disjoint intervals:
    \begin{align}
        \mathcal{A} = \{A_0, ..., A_k\} 
    = \{[t_0, t_1), ..., [t_{k-1}, t_{k}) ,[t_k, t_{k+1}]\},
    \end{align}
    such that 
    $
        \bigcup_{A_i\in\mathcal{A}}A_i = [a,b].
    $
\end{definition}
Now, using Definition \ref{def:bernstein_operator}, we define the Bernstein splines operator as follows:
\begin{definition}[Bernstein spline operator]
\label{def:spline_operator}
    For a function $f:[a,b] \to \mathbb{R}$, and a knot collection $\mathcal{A}$ of size $k + 1$, we define the order $q$ Bernstein spline operator as:
    \begin{align}
        \spline{q} f (t)
        =
        \sum_{i=0}^k
        \One_{A_i}
        \, 
        B^q \, \{\One_{\bar{A}_i}f\}(t),
    \end{align}
    where $\bar{A}_i$ denotes the closure of $A_i$.
\end{definition}
\begin{remark}[Continuity of the Bernstein splines operator]
    By Remark \ref{rem:bernstein_endpoints} and the continuity of $B^q$ by the continuity polynomials, we have that $\spline{q} f$ is continuous for continuous $f$.
\end{remark}
We can now give an error bound in approximation using the splines operator:

\begin{theorem}[Bernstein splines error bound]
\label{thm:bernstein_spline_error}
    Let $f:[a,b]\to \mathbb{R}$ be a continuous function with modulus of continuity $\omega$.
    Let $h = \sup_{A_i\in \mathcal{A}}|A_i|$ be the maximum knot size of knot collection $\mathcal{A}$ on $[a,b]$.
    Then,
    \begin{align}
        |f(t) - \spline{q} f(t)|
        &\leq
        \fr{5}{4}\,\omega
        \left(
        f;\fr{h}{\sqrt{q}}
        \right),
        \quad 
        \text{for all } t \in [a,b].
    \end{align}
\end{theorem}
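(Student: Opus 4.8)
The plan is to localize the estimate to a single knot and then invoke the single-interval Bernstein bound of Theorem~\ref{thm:bernstein_pol_error}. First I would fix an arbitrary $t \in [a,b]$. Since the knot collection $\mathcal{A} = \{A_0,\dots,A_k\}$ partitions $[a,b]$, there is a unique index $i$ with $t \in A_i$, and by Definition~\ref{def:spline_operator} all the other summands vanish at $t$, so that $\spline{q}f(t) = B^q\{\One_{\bar A_i}f\}(t)$, where $B^q$ is understood as the Bernstein operator over the interval $\bar A_i = [t_i, t_{i+1}]$.

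The key observation is that this quantity depends on $f$ only through its restriction to $\bar A_i$: the sampling nodes $t_i + j(t_{i+1}-t_i)/q$, $j=0,\dots,q$, appearing in Definition~\ref{def:bernstein_operator} all lie in $\bar A_i$, so $\One_{\bar A_i}f$ may be replaced there by $f|_{\bar A_i}$ without changing the value. Hence I would apply Theorem~\ref{thm:bernstein_pol_error} to the continuous function $f|_{\bar A_i}$ on $[t_i,t_{i+1}]$, obtaining $|f(t) - \spline{q}f(t)| \le \tfrac{5}{4}\,\omega\bigl(f|_{\bar A_i};\, |A_i|/\sqrt{q}\bigr)$.

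It then remains to pass from the restricted modulus of continuity to that of $f$ on all of $[a,b]$ and to replace $|A_i|$ by the uniform bound $h$. Both steps are immediate from Definition~\ref{def:mod_cont}: taking the supremum defining $\omega(f|_{\bar A_i};\delta)$ over the smaller set $\bar A_i \times \bar A_i$ can only decrease it, so $\omega(f|_{\bar A_i};\delta)\le \omega(f;\delta)$; and $\omega(f;\cdot)$ is nondecreasing, so $|A_i|\le h$ yields $\omega(f;\,|A_i|/\sqrt{q})\le \omega(f;\,h/\sqrt{q})$. Chaining these inequalities gives the claimed bound, and since $t\in[a,b]$ was arbitrary the estimate holds uniformly.

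I do not expect a genuine obstacle here; the only points requiring care are making explicit that the operator $B^q$ in Definition~\ref{def:spline_operator} acts over the local interval $\bar A_i$ (so that its sampling nodes remain inside $\bar A_i$ and the indicator factor is harmless), and spelling out the two monotonicity properties of the modulus of continuity — monotonicity under restriction of the domain and monotonicity in the argument $\delta$ — used in the final chain of inequalities.
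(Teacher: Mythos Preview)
Your proposal is correct and follows exactly the approach indicated in the paper, which simply says the result follows by combining the spline operator definition with the single-interval Bernstein bound of Theorem~\ref{thm:bernstein_pol_error}. You have just spelled out in detail what the paper leaves implicit, including the localization to $\bar A_i$ and the two monotonicity properties of the modulus of continuity.
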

\begin{proof}
    The proof follows by combining the operator definition with the polynomial error bound of Theorem \ref{thm:bernstein_pol_error}.
    \end{proof}
\begin{definition}[Spline fractional integration operator]
\label{def:splint_operator}
For $u: [a,b] \to \mathbb{R}$ continuous and knot collection $\mathcal{A}$, the order $0<\alpha<1$ splines fractional integration operator $\splint{a}{t}{\alpha}$ is given as:
\begin{align}
    \splint{a}{t}{\alpha} u(t)
    =
    \spline{q}
    \, \rlint{a}{t}{\alpha}
    u (t), \quad \textrm{ for } a \leq t \leq b,
\end{align}
where $\spline{q}$ is the spline operator as defined in Definition \ref{def:spline_operator}.
\end{definition}
To provide error bounds, we can use the results as obtained above:
\begin{lemma}[Error of the spline integration operator] \label{lemma:spline_int_error}
Let $u:[a,b]\to \mathbb{R}$ be a continuous function.
Then,
\begin{align}
    \Cnorm{a}{b}{\rlint{a}{t}{\alpha} u - \splint{a}{t}{\alpha} u}
    \leq
    \left( \fr{h}{\sqrt{q}} \right)^\alpha
    \fr{5\,\Cnorm{a}{b}{u}
    }{2\,\Gamma(\alpha+1) },
\end{align}
where $\Cnorm{a}{b}{f} := \sup_{t\in (a, b]} | f(t)|$ and $h = \sup_{A_i\in \mathcal{A}}|A_i|$ is the maximum knot size of knot collection $\mathcal{A}$ on $[a,b]$.
\end{lemma}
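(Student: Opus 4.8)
The plan is to reduce the claim to the Bernstein splines error bound of Theorem~\ref{thm:bernstein_spline_error}, applied to the single function $g(t) := \rlint{a}{t}{\alpha} u(t)$. By Definition~\ref{def:splint_operator} we have $\splint{a}{t}{\alpha} u = \spline{q} g$, so the quantity to be estimated is exactly $\Cnorm{a}{b}{g - \spline{q} g}$. To apply Theorem~\ref{thm:bernstein_spline_error} I first need $g$ continuous on the \emph{closed} interval $[a,b]$; this follows from the H\"older estimate below (valid for all $a \le t_1 \le t_2 \le b$), taken with $t_1 = a$ and $g(a) = 0$ as dictated by the convention in Definition~\ref{def:rl_int}. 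Theorem~\ref{thm:bernstein_spline_error} then gives
\[
    \Cnorm{a}{b}{\rlint{a}{t}{\alpha} u - \splint{a}{t}{\alpha} u}
    \le
    \fr{5}{4}\,\omega\!\left( g;\, \fr{h}{\sqrt{q}} \right),
\]
so everything reduces to bounding the modulus of continuity of the fractional integral $g$.

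The core step is the H\"older-type estimate $\omega(g;\delta) \le \tfrac{2}{\Gamma(\alpha+1)}\,\Cnorm{a}{b}{u}\,\delta^{\alpha}$. I would fix $a \le t_1 \le t_2 \le b$ and split the defining integral at $t_1$:
\[
    g(t_2) - g(t_1)
    =
    \fr{1}{\Gamma(\alpha)}\left[
    \int_a^{t_1}\bigl((t_2-s)^{\alpha-1} - (t_1-s)^{\alpha-1}\bigr) u(s)\dto s
    +
    \int_{t_1}^{t_2}(t_2-s)^{\alpha-1} u(s)\dto s
    \right].
\]
Since $0<\alpha<1$, the power $x\mapsto x^{\alpha-1}$ is decreasing, so $(t_2-s)^{\alpha-1} \le (t_1-s)^{\alpha-1}$ for $s<t_1$; bounding $|u|$ by $\Cnorm{a}{b}{u}$ and evaluating the three elementary power integrals $\int_a^{t_1}(t_1-s)^{\alpha-1}\dto s = \tfrac{(t_1-a)^\alpha}{\alpha}$, $\int_a^{t_1}(t_2-s)^{\alpha-1}\dto s = \tfrac{(t_2-a)^\alpha-(t_2-t_1)^\alpha}{\alpha}$, and $\int_{t_1}^{t_2}(t_2-s)^{\alpha-1}\dto s = \tfrac{(t_2-t_1)^\alpha}{\alpha}$ yields
\[
    |g(t_2) - g(t_1)|
    \le
    \fr{\Cnorm{a}{b}{u}}{\Gamma(\alpha+1)}
    \bigl[(t_1-a)^{\alpha} - (t_2-a)^{\alpha} + 2(t_2-t_1)^{\alpha}\bigr]
    \le
    \fr{2\,\Cnorm{a}{b}{u}}{\Gamma(\alpha+1)}\,(t_2-t_1)^{\alpha},
\]
where the last step uses $(t_1-a)^{\alpha} \le (t_2-a)^{\alpha}$. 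Taking the supremum over $|t_2-t_1| \le \delta$ gives the modulus bound; substituting $\delta = h/\sqrt{q}$ into the first display and simplifying $\tfrac{5}{4}\cdot 2 = \tfrac{5}{2}$ completes the argument.

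I expect the only real care to be needed in the bookkeeping of this second step: fixing the sign of $(t_2-s)^{\alpha-1} - (t_1-s)^{\alpha-1}$ correctly and evaluating the power integrals so that the constant comes out exactly $2/\Gamma(\alpha+1)$ rather than something larger. A secondary point worth a sentence is that the supremum norm $\Cnorm{a}{b}{\cdot}$, defined over $(a,b]$, agrees with the supremum over $[a,b]$ once $g$ is continuously extended to $a$, so there is no discrepancy with the norm in Theorem~\ref{thm:bernstein_spline_error}. Everything else is a direct chaining of results already available in the excerpt.
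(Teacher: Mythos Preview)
Your proposal is correct and follows essentially the same route as the paper: reduce to Theorem~\ref{thm:bernstein_spline_error} applied to $g=\rlint{a}{t}{\alpha}u$, then bound $\omega(g;\delta)$ by splitting the integral at the smaller point, using monotonicity of $x\mapsto x^{\alpha-1}$ and the three elementary power integrals to obtain the constant $2/\Gamma(\alpha+1)$. The only cosmetic difference is that the paper phrases the $|u|\le\Cnorm{a}{b}{u}$ step as ``H\"older's inequality'' and does not separately comment on continuity at $t=a$ or the $(a,b]$ versus $[a,b]$ norm convention; your extra remarks on these points are harmless and arguably clarify the argument.
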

\begin{proof}
By the Definition \ref{def:splint_operator} and Theorem \ref{thm:bernstein_spline_error} we have that for $a\leq t\leq b$
    \begin{align}
        |\rlint{a}{t}{\alpha} u(t) - \splint{a}{t}{\alpha} u(t)|
        =
        \left|
        \rlint{a}{t}{\alpha} \,
        u(t)
        -
        \spline{q}
        \, \rlint{a}{t}{\alpha} u(t)
        \right|
        \leq
        \fr{5}{4}\,\omega \left(
        \rlint{a}{t}{\alpha}u(t); \fr{h}{\sqrt{q}}
        \right).
    \end{align}
Now, let us bound the modulus of continuity.
Take $a \leq t < t'\leq b$. Then:
    \begin{gather}
        |
        \rlint{a}{t'}{\alpha} u(t')
        -
        \rlint{a}{t}{\alpha} u(t)
        |
        =
        \fr{1}{\Gamma(\alpha)}
        \bigg{|}
        \int_a^{t'} u(s)(t'-s)^{\alpha-1}\dto s
        -
        \int_a^{t} u(s)(t-s)^{\alpha-1}\dto s
        \bigg{|}
        \\
        =
        \fr{1}{\Gamma(\alpha)}
        \bigg{|}
        \int_a^{t} u(s)
        \left[
        (t'-s)^{\alpha-1}
        -
        (t-s)^{\alpha-1}
        \right]\dto s 
        +
        \int_t^{t'} u(s)(t'-s)^{\alpha-1}\dto s
        \bigg{|}.
    \end{gather}
Since $\alpha-1 \in (-1, 0)$ and $(t'-s) > (t-s) \geq 0$, we have $(t-s)^{\alpha-1} - (t'-s)^{\alpha-1} \geq 0$ for $a\leq s\leq t$.
Furthermore, $(t'-s)^{\alpha-1} \geq 0$ for $t \leq s \leq t'$.
Hence, we can apply the triangle inequality and Hölder's inequality twice, yielding:
    \begin{gather}
        |
        \rlint{a}{t'}{\alpha} u(t')
        -
        \rlint{a}{t}{\alpha} u(t)
        |
        \\
        \leq
        \fr{1}{\Gamma(\alpha)}
        \left(
        \int_a^{t}
        \left(
        (t-s)^{\alpha-1}
        -
        (t'-s)^{\alpha-1}
        \right)
        \mathrm{d}s
        \, 
        +
        \int_t^{t'}(t'-s)^{\alpha-1}
        \mathrm{d}s
        \, 
        \right)\Cnorm{a}{b}{u}
        \\
        =
        \fr{
        -
        \left(
        (t'-a)^\alpha
        -(t-a)^\alpha
        \right)
        +2(t'-t)^\alpha
        }{\Gamma(\alpha+1)}
        \Cnorm{a}{b}{u}
        \leq 
        \fr{2(t'-t)^\alpha}{\Gamma(\alpha+1)}
        \Cnorm{a}{b}{u}.
        \label{eq:app_rl_cont_t_bounds}
    \end{gather}
    Here, the final estimate follows since $t'>t$ gives $(t'-a)^\alpha-(t-a)^\alpha > 0$.
Finally, taking the maximum distance $t' - t = h$, we have
\begin{align}
    \omega \left(
        \rlint{a}{t}{\alpha}u(t); \fr{h}{\sqrt{q}}
        \right)
    =
    \fr{2(hq^{-1/2})^\alpha}{\Gamma(\alpha+1)}
    \Cnorm{a}{b}{u},
\end{align}
resulting in the error bound and concluding the proof.
\end{proof}
\begin{lemma}[Bound of the spline integration operator]
\label{thm:splint_bound}
    Take a continuous function $u: [a,b]\to \mathbb{R}$.
    Then, for $a\leq t\leq b$, the operator $\splint{a}{t}{\alpha}$ is bounded. In particular, 
    \begin{align}
        \Cnorm{a}{b}{\splint{a}{t}{\alpha} u}
        \leq
        \Psi
        \Cnorm{a}{b}{u},
        \quad 
        \text{with}
        \quad
        \Psi=\fr{
        \fr{5}{2}
        \left(
        \fr{h}{\sqrt{q}}
        \right)^\alpha
        + (t-a)^\alpha
        }{\Gamma(\alpha+1)}.
    \end{align}
    Here $h = \sup_{A_i \in \mathcal{A}}|A_i|$  is the maximum knot size and $q$ is the order of Bernstein basis polynomial splines.
    Furthermore, since $\splint{a}{t}{\alpha}$ is linear, this also gives continuity in $u$.
\end{lemma}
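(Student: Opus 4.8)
The plan is to split $\splint{a}{t}{\alpha} u$ into the exact Riemann--Liouville integral plus the splines approximation error, and to bound each piece with a result already available in the excerpt. First I would apply the triangle inequality pointwise: for $a\leq t\leq b$,
\begin{align}
    |\splint{a}{t}{\alpha} u(t)|
    \leq
    |\rlint{a}{t}{\alpha} u(t)|
    +
    \bigl|\splint{a}{t}{\alpha} u(t) - \rlint{a}{t}{\alpha} u(t)\bigr|.
\end{align}
The second term is controlled directly by Lemma~\ref{lemma:spline_int_error}, which gives
$\bigl|\splint{a}{t}{\alpha} u(t) - \rlint{a}{t}{\alpha} u(t)\bigr| \leq \bigl(h/\sqrt{q}\bigr)^\alpha \tfrac{5\,\Cnorm{a}{b}{u}}{2\,\Gamma(\alpha+1)}$, uniformly in $t$; this contributes the $\tfrac52 (h/\sqrt q)^\alpha$ part of $\Psi$.

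Next I would estimate the Riemann--Liouville integral directly from Definition~\ref{def:rl_int}. Bounding $|u(s)|\leq \Cnorm{a}{b}{u}$ and using nonnegativity of the kernel $(t-s)^{\alpha-1}$ on $(a,t)$,
\begin{align}
    |\rlint{a}{t}{\alpha} u(t)|
    \leq
    \fr{\Cnorm{a}{b}{u}}{\Gamma(\alpha)}
    \int_a^t (t-s)^{\alpha-1}\dto s
    =
    \fr{(t-a)^\alpha}{\Gamma(\alpha+1)}\,\Cnorm{a}{b}{u},
\end{align}
where I use $\int_a^t (t-s)^{\alpha-1}\dto s = (t-a)^\alpha/\alpha$ together with $\alpha\,\Gamma(\alpha) = \Gamma(\alpha+1)$. (Equivalently, this is Proposition~\ref{prop:rl_bdd} with $p=\infty$ applied on the subinterval $[a,t]$.) This supplies the $(t-a)^\alpha$ part of $\Psi$.

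Adding the two estimates gives exactly $|\splint{a}{t}{\alpha} u(t)| \leq \Psi\,\Cnorm{a}{b}{u}$ with $\Psi$ as stated, and taking the supremum over $t\in(a,b]$ yields the norm inequality. Finally, linearity of $\splint{a}{t}{\alpha}$ — inherited from the linearity of $\rlint{a}{t}{\alpha}$ and of the Bernstein spline operator $\spline{q}$ — upgrades the bound to continuity in $u$. I do not expect a genuine obstacle here: the argument is a two-line decomposition. The only mild point to watch is that the constant $\Psi$ is kept in its $t$-dependent form through $(t-a)^\alpha$ rather than being replaced by the worst case $(b-a)^\alpha$; both versions follow immediately from the displayed bounds, and retaining the $t$-dependence is what is needed for the later contraction-mapping arguments.
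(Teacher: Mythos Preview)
Your proposal is correct and matches the paper's proof essentially line for line: the paper also adds and subtracts $\rlint{a}{t}{\alpha}u$, invokes Lemma~\ref{lemma:spline_int_error} for the difference, and Proposition~\ref{prop:rl_bdd} (on $[a,t]$) for the Riemann--Liouville term, then combines. Your remark about keeping the $t$-dependent $(t-a)^\alpha$ rather than $(b-a)^\alpha$ is exactly the point, and your pointwise-then-supremum phrasing makes this slightly cleaner than the paper's norm-level write-up.
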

\begin{proof}
First, write:
\begin{align}
        \Cnorm{a}{b}{\splint{a}{t}{\alpha} u}
        \leq
        \Cnorm{a}{b}{\splint{a}{t}{\alpha}u
        -
        \rlint{a}{t}{\alpha}u
        +
        \rlint{a}{t}{\alpha}u
        }
        \leq
        \Cnorm{a}{b}{\splint{a}{t}{\alpha}u
        -
        \rlint{a}{t}{\alpha}u
        }
        +
        \Cnorm{a}{b}{
        \rlint{a}{t}{\alpha}u
        }.
    \end{align}
Then, by the bounds of Lemma \ref{lemma:spline_int_error} and Proposition \ref{prop:rl_bdd} for $a\leq t$ we have:
\begin{align}
    \Cnorm{a}{b}{\splint{a}{t}{\alpha} u}
    \leq
    \fr{
    \fr{5}{2}
    \left(
    \fr{h}{\sqrt{q}}
    \right)^\alpha
    + (t-a)^\alpha
    }{\Gamma(\alpha+1)}
    \Cnorm{a}{b}{
    u
    },
\end{align}
providing the operator bound.
\end{proof}
\section{Bernstein spline approximation results}
\label{sec:spline_conv}
We first establish some notational conventions for this and the following sections.

For a matrix $A \in \mathbb{R}^{m \times n}$, we denote the supremum-induced matrix norm as
\begin{align}
    \|A\|_{\infty }=\max _{1\leq i\leq m}\sum _{j=1}^{n}\left|A_{ij}\right|.
\end{align}

We use the following notation for the weighted function space $C_{1-\gamma}[0, T]$ and ``$\eps-$shifted'' weighted space $C_{1-\gamma}[\eps, T]$ respectively:
\begin{gather}
    C_{1-\gamma}[0, T]
    =
    \{  f: (0, T] \to \mathbb{R}^d : t^{1-\gamma} f \in C[0,T] \},
    \\
    \weighnorm{f}
    =
    \sup_{t\in(0,T]}
    |
    t^{1-\gamma}
    f(t)
    |,
    \\
    C_{1-\gamma}[\eps, T]
    =
    \{  f: (\eps, T] \to \mathbb{R}^d : t^{1-\gamma} f \in C[\eps,T] \},
    \label{eq:shifted_norms}
    \\
    \epsweighnorm{f}
    =
    \sup_{t\in(\eps,T]}
    |
    t^{1-\gamma}
    f(t)
    |.
\end{gather}
\begin{proposition}
\label{prop:eps_norms_eq}
    $\eps$-shifted weighted continuous functions are continuous,
    i.e., for $0<\eps<t\leq T$,
    \begin{align}
        C_{1-\gamma}[\eps, T]
        =
        C[\eps, T].
    \end{align}
    \begin{proof}
        First, assume $f \in C_{1-\gamma}[\eps, T]$. 
        Hence, by \eqref{eq:shifted_norms} we know $t^{1-\gamma} f \in C[\eps, T]$.
        Then,  since $t\in (\eps, T]$ with $\eps>0$, $t^{\gamma-1}$ is bounded and continuous.
        Hence by continuity of composition by multiplication, $t^{\gamma-1} t^{1-\gamma}f = f \in C[\eps, T]$.
        For the reverse implication, assume $f\in C[\eps, T]$.
        Then, by the same argument, $t^{1-\gamma}f \in C[\eps, T]$ and thus $f\in C_{1-\gamma}[\eps, T]$, proving the claim.
    \end{proof}
\end{proposition}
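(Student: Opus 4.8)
The plan is to prove the two set inclusions separately, the key observation being that because $\eps>0$, the weight $t\mapsto t^{1-\gamma}$ and its reciprocal $t\mapsto t^{\gamma-1}$ are both continuous and bounded on the compact interval $[\eps,T]$; in particular, since $\gamma=\alpha+\beta-\alpha\beta\in(0,1]$ we have $1-\gamma\in[0,1)$, so $\eps^{1-\gamma}\le t^{1-\gamma}\le T^{1-\gamma}$ for $t\in[\eps,T]$, and the weight is in particular bounded away from $0$.

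For the inclusion $C_{1-\gamma}[\eps,T]\subseteq C[\eps,T]$, I would take $f\in C_{1-\gamma}[\eps,T]$, so that by definition $g:=t^{1-\gamma}f\in C[\eps,T]$, and then write $f = t^{\gamma-1}\,g$ as a pointwise product of two continuous functions on $[\eps,T]$; continuity of products then gives $f\in C[\eps,T]$. For the reverse inclusion, given $f\in C[\eps,T]$ the product $t^{1-\gamma}f$ is again continuous on $[\eps,T]$ by the same argument, which is exactly the statement $f\in C_{1-\gamma}[\eps,T]$.

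As a complement one may record that the two norms are equivalent on this common space: from $\eps^{1-\gamma}\le t^{1-\gamma}\le T^{1-\gamma}$ one obtains $\eps^{1-\gamma}\,\Cnorm{\eps}{T}{f}\le\epsweighnorm{f}\le T^{1-\gamma}\,\Cnorm{\eps}{T}{f}$, so convergence in one norm is equivalent to convergence in the other — which is what makes working in the $\eps$-shifted space convenient later.

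There is essentially no hard step here; the only point requiring care is that $\eps>0$ is indispensable. At $\eps=0$ the reciprocal weight $t^{\gamma-1}$ is unbounded near the origin whenever $\gamma<1$, the argument above breaks, and indeed $C_{1-\gamma}[0,T]\supsetneq C[0,T]$ (e.g.\ $t\mapsto t^{\gamma-1}$ lies in the former but not the latter) — this failure at $t=0$ is precisely the singular behavior that motivates shifting the time domain away from the origin.
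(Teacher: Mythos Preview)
Your proof is correct and follows essentially the same approach as the paper: both directions are obtained by noting that on $[\eps,T]$ with $\eps>0$ the weight $t^{1-\gamma}$ and its reciprocal are continuous, so multiplying by them preserves continuity. Your additional remarks on norm equivalence and the necessity of $\eps>0$ are useful context but go slightly beyond what the paper records.
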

We now present the main splines approximation theorem:
\begin{theorem}[Hilfer derivative spline approximations]
\label{thm:hilf_splines}
    Consider FIVP \eqref{eq:ivp}.
    Assume the requirements of existence and uniqueness of Theorem \ref{thm:ex_un} hold.
    For a knot collection $\mathcal{A} = \{A_0, ..., A_k\} $ on the interval $[\eps, T]$ with $\eps>0$, we can construct an order $q\in \mathbb{N}$ Bernstein spline approximation $\bf{y}^{q,\eps}_{j, \bf{n}} : [\eps, t_{j+1}] \to \mathbb{R}^d$ for $j \in \{0, \ldots k\}$ and $\bf{n} \in \mathbb{N}^j$, such that if 
    \begin{align}
        \left\|
        \left(1+\fr{h_i}{t_i}\right)^{1-\gamma} \, \Psi_i \, K
        \right\|_\infty
        < 1,
        \quad
        \text{ for all }i\in \{0, ..., k\},
    \end{align}
    $\text{where }\Psi_i=\fr{
        \fr{5}{2}
        \left(
        \fr{h_i}{\sqrt{q}}
        \right)^\alpha
        + h_i^\alpha
        }{\Gamma(\alpha+1)} \text{ and } \, h_i = |t_{i+1}-t_i|,$
    then $\bf{y}^{q,\eps} := \Lim{\bf{n}\to\infty} \bf{y}_{k, \bf{n}}^{q,\eps}$ satisfies
    \begin{align}
        \bf{y}^{q,\eps}(t)
        =
        \fr{\bf{\tilde{y}}_0}{\Gamma(\gamma)}
        t^{\gamma-1}
        +
        \splint{\eps}{t}{\alpha}\bf{f}(t, \bf{y}^{q,\eps}(t)), 
        \quad \eps \leq  t \leq T.
        \label{eq:y_q_eps_sol}
    \end{align}
    Furthermore, $\bf{y}^{q, \eps} \in C[\eps, T]$ and $\bf{y}^{q,\eps}_{j, \bf{n}} \in C[\eps, t_{j+1}]$ for all $j \in \{0, \ldots k\}$ and $\bf{n} \in \mathbb{N}^j$.
\end{theorem}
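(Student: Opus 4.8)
The plan is to construct the approximations $\bf{y}^{q,\eps}_{j,\bf{n}}$ by a nested iteration: an outer knot-for-knot sweep $j=0,1,\dots,k$ that extends the solution interval from $[\eps,t_j]$ to $[\eps,t_{j+1}]$, and for each $j$ an inner Picard-type iteration indexed by $\bf{n}\in\mathbb{N}^j$ whose limit solves the spline-integral fixed-point equation on $[\eps,t_{j+1}]$. First I would define, for fixed $j$ and a given continuous approximation on $[\eps,t_j]$, the operator
\begin{align}
    \{\mathcal{T}^{\eps,q}_j \bf{z}\}(t)
    =
    \fr{\bf{\tilde{y}}_0}{\Gamma(\gamma)}t^{\gamma-1}
    +
    \splint{\eps}{t}{\alpha}\bf{f}(t, \bf{z}(t)),
    \quad t\in[\eps,t_{j+1}],
\end{align}
acting on $C[\eps,t_{j+1}]$ (note $t^{\gamma-1}$ is continuous and bounded on $[\eps,t_{j+1}]$ since $\eps>0$, and by Proposition~\ref{prop:eps_norms_eq} the weighted and unweighted spaces coincide here). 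Since $\bf{f}(\cdot,\bf{z}(\cdot))$ is continuous whenever $\bf{z}$ is, and $\splint{\eps}{t}{\alpha}$ maps continuous functions to continuous functions (the spline operator preserves continuity, by the remark after Definition~\ref{def:spline_operator}), $\mathcal{T}^{\eps,q}_j$ maps $C[\eps,t_{j+1}]$ into itself.

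Next I would show $\mathcal{T}^{\eps,q}_j$ is a contraction on $C[\eps,t_{j+1}]$ with respect to the weighted norm $\|\cdot\|_{C_{1-\gamma}[\eps,t_{j+1}]}$. For two inputs $\bf{z}_1,\bf{z}_2$, the linear term cancels, and the difference is $\splint{\eps}{t}{\alpha}[\bf{f}(t,\bf{z}_1)-\bf{f}(t,\bf{z}_2)]$; applying the spline-integration bound of Lemma~\ref{thm:splint_bound} componentwise together with the Lipschitz bound $|\bf{f}(t,\bf{z}_1)-\bf{f}(t,\bf{z}_2)|\le K|\bf{z}_1-\bf{z}_2|$ gives a contraction constant controlled by $\Psi_j K$ in the unweighted sup-norm; converting between the unweighted sup-norm on $[\eps,t_{j+1}]$ and the weighted norm introduces exactly the factor $(1+h_j/t_j)^{1-\gamma}$ (this comes from bounding $t^{1-\gamma}/s^{1-\gamma}\le (t_{j+1}/\eps')^{1-\gamma}$-type ratios over the relevant subinterval and telescoping the knot contributions — this bookkeeping is the step I expect to be the main obstacle, since one must carefully track the weight ratio restricted to the newly added knot $[t_j,t_{j+1}]$ rather than the whole $[\eps,t_{j+1}]$, and it is precisely this localization that yields $h_j/t_j$ instead of a $T$-dependent constant). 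Under the stated hypothesis $\|(1+h_j/t_j)^{1-\gamma}\Psi_j K\|_\infty<1$, the Banach fixed-point theorem then yields a unique fixed point $\bf{y}^{q,\eps}_{j}\in C[\eps,t_{j+1}]$, obtained as the limit of the Picard iterates $\bf{y}^{q,\eps}_{j,\bf{n}}$; matching the restriction of $\bf{y}^{q,\eps}_{j}$ to $[\eps,t_j]$ with $\bf{y}^{q,\eps}_{j-1}$ (consistency of the spline operator under interval restriction, using that the knots of $\mathcal{A}$ subdivide $[\eps,t_{j+1}]$) makes the construction well-defined across the sweep.

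Finally, taking $j=k$ gives $\bf{y}^{q,\eps}:=\lim_{\bf{n}\to\infty}\bf{y}^{q,\eps}_{k,\bf{n}}$ as the fixed point of $\mathcal{T}^{\eps,q}_k$ on $C[\eps,T]$, which is exactly equation~\eqref{eq:y_q_eps_sol}; continuity of $\bf{y}^{q,\eps}$ and of each iterate $\bf{y}^{q,\eps}_{j,\bf{n}}$ follows since they are Picard iterates of an operator preserving $C[\eps,\cdot]$, with the base case $\bf{y}^{q,\eps}_{j,\bf{0}}(t)=\tfrac{\bf{\tilde{y}}_0}{\Gamma(\gamma)}t^{\gamma-1}$ continuous on $[\eps,t_{j+1}]$. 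I would close by remarking that all matrix inequalities and the Lipschitz estimate are applied componentwise as in Theorem~\ref{thm:ex_un}, and that the nonnegativity of $K$ guarantees $\|\cdot\|_\infty$ of the product behaves monotonically as needed for the contraction argument.
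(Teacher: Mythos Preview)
Your proposal follows essentially the same strategy as the paper: a knot-by-knot Picard iteration on $[\eps,T]$, with a local contraction argument on each new knot via Banach's fixed-point theorem and the spline-integration bound of Lemma~\ref{thm:splint_bound}. The one substantive difference is that the paper first makes the substitution $\bf{v}(t)=t^{1-\gamma}\bf{y}(t)$ and works with the transformed integrand $\bf{g}(t,s,\bf{v}(s))=t^{1-\gamma}\bf{f}(s,s^{\gamma-1}\bf{v}(s))$ in the \emph{unweighted} space $C[\eps,T]$, rather than carrying the weighted norm throughout. This makes the step you correctly flag as the main obstacle transparent: on knot $A_i$ the factor $(1+h_i/t_i)^{1-\gamma}$ is simply the product $t_{i+1}^{1-\gamma}\cdot t_i^{\gamma-1}$ coming from bounding the explicit $t^{1-\gamma}$ and $s^{\gamma-1}$ in $\bf{g}$ over $s,t\in A_i$, and the local $\Psi_i$ (with $h_i^\alpha$ rather than $(t_{i+1}-\eps)^\alpha$) appears because the difference of successive iterates vanishes on $A_0,\dots,A_{i-1}$, so $\splint{\eps}{t}{\alpha}$ of that difference reduces to $\splint{t_i}{t}{\alpha}$.

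Your direct weighted-norm route is equivalent, but one point in your write-up needs sharpening: a norm conversion on all of $[\eps,t_{j+1}]$ would produce $(t_{j+1}/\eps)^{1-\gamma}$, not $(1+h_j/t_j)^{1-\gamma}$. To get the stated constant you must first show that your operator $\mathcal{T}^{\eps,q}_j$ preserves the affine subspace of functions agreeing with $\bf{y}^{q,\eps}_{j-1}$ on $[\eps,t_j]$ (this follows since $\bf{y}^{q,\eps}_{j-1}$ is the fixed point there), so that the difference of two iterates is supported on $A_j$; only then do the sup and the weight ratio localize to $[t_j,t_{j+1}]$ and give $t_{j+1}^{1-\gamma}t_j^{\gamma-1}=(1+h_j/t_j)^{1-\gamma}$. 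The paper's change of variable buys exactly this: it bakes the weight into $\bf{g}$ so that the localization to $A_i$ is automatic once the previous-knot values are frozen.
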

\begin{proof}
    Consider the equivalent integral equation \eqref{eq:eq_int_eq} of IVP \eqref{eq:ivp}.
    Taking the transformation $\bf{v}(t) := t^{1-\gamma} \bf{y}(t)$ then gives:
    \begin{align}
        \bf{v}(t)
        =
        \bf{\tilde{v}_0}
        +
        \rlint{0}{t}{\alpha}
        \bf{g}(t,s,\bf{v}(s))
        ,
        \quad
        t> 0.
    \end{align}
    Here, $\bf{\tilde{v}}_0 := \fr{\bf{\tilde{y}}_0}{\Gamma(\gamma)}$ and $\bf{g}(t,s,\bf{v}(s)) := t^{1-\gamma}
    \bf{f}(s, s^{\gamma-1} \bf{v}(s))$, where $s\leq t$ is introduced as the variable of integration for notational clarity.

    We introduce the corresponding $\eps$-shifted integral equation:
    \begin{align}
        \bf{v}^\eps(t)
        =
        \bf{\tilde{v}_0}
        +
        \rlint{\eps}{t}{\alpha}
        \bf{g}(t,s,\bf{v}^\eps(s))
        ,
        \quad
        t\geq \eps
        .\label{eq:hilf_ivp_eps_shifted}
    \end{align}
    Since by assumption $\bf{f}(\cdot, \bf{y}(\cdot)) \in C_{1-\gamma}[0, T]$ for any $\bf{y} \in C_{1-\gamma}[0,T]$, we also have $\bf{f}(\cdot, \bf{y}(\cdot)) \in C_{1-\gamma}[\eps, T]$ and hence $\bf{f}(\cdot, \bf{y}(\cdot)) \in C[\eps, T]$ by Proposition~\ref{prop:eps_norms_eq}.
    Thus, we observe that by construction, $\bf{g}(t, \cdot, \bf{v}^\eps(\cdot)) \in C[\eps, T]$ for any $\bf{v}^\eps \in C[\eps, T]$.
    Furthermore, given  $\eps \leq s\leq t$, we have that $\bf{g}$ is Lipschitz in $\bf{v}^\eps$ by Lipschitz continuity of $\bf{f}$.
    Hence, we have satisfied the requirements of Theorem \ref{thm:ex_un} for $\beta = 1$, $a = \eps$, $b = T$ and thus know that a unique solution $\bf{v}^\eps \in C[\eps, T]$  exists for integral equation~\eqref{eq:hilf_ivp_eps_shifted}.

    To obtain approximations to solutions $\bf{v}^\eps$ of \eqref{eq:hilf_ivp_eps_shifted}, we construct the solution spline approximation sequence $\bf{v}^{q,\eps}_{j, \bf{n}} : [\eps,t_{j+1}]\to\mathbb{R}^d$ as follows:
    \begin{align}
    \label{eq:local_splines_ind_build}
        \bf{v}^{q,\eps}_{j, \bf{n}}(t) 
        =
        \sum_{i=0}^j \bf{u}^{q,\eps}_{i, n_i} \One_{A_i}(t)
        =
        \begin{cases}
            \bf{u}^{q,\eps}_{0, n_0}(t), \quad \textrm{if} \quad &t \in A_0,\\
            \bf{u}^{q,\eps}_{1, n_1}(t), \quad \textrm{if} \quad &t \in A_1, \\
            \quad \vdots\\
            \bf{u}^{q,\eps}_{j, n_j}(t), \quad \textrm{if} \quad &t \in A_j. \\
        \end{cases}
    \end{align}
    Here, $\bf{u}_{i, n} : A_i \to \mathbb{R}^d$ satisfies:
    \begin{align}
    \label{eq:local_splines_it}
        (\bf{u}^{q,\eps}_{i,{n+1}}(t))_{n\geq 0}
        =
        \bf{\tilde{v}}_0
        +
        \splint{\eps}{t}{\alpha}
        \bf{g}(t, s,\bf{v}^{q, \eps}_{i, \bf{n}}(s)),
        \quad \bf{n}_i = n,
        \quad t \in A_i,
    \end{align}
    where $\splint{\eps}{t}{\alpha}$ is the splines integration operator as defined in Definition \ref{def:splint_operator}.
    For initial iteration values, we prescribe:
    \begin{align}
        \begin{cases}
        \label{eq:splines_initial}
        \bf{u}^{q,\eps}_{i, 0}
        :=
        {\bf{\tilde{v}}}_0,
        \quad &\textrm{for}\quad  i =0,
        \\
        \bf{u}^{q,\eps}_{i, 0}
        :=
        \Lim{t \uparrow t_i}
        \bf{u}^{q,\eps}_{i-1, \bf{n}_{i-1}}
        (t),
        \quad &\textrm{for}\quad  i \geq 1.
        \end{cases}
    \end{align}
    Then, the limit of the splines approximation sequence on the whole interval $[\eps,T]$ is given by:
    \begin{align}
        \label{eq:y_q_limit_form}
        \bf{v}^{q,\eps}(t)
        :=
        \bf{v}^{q,\eps}_{k, \infty} (t)
        :=
        \sum_{i=0}^k
        \lim_{n\to\infty}
        \bf{u}^q_{i,n}
        \One_{A_i}(t).
    \end{align}
    To prove convergence of \eqref{eq:y_q_limit_form}, we first need to show that all intermediate iterations $\bf{v}^{q,\eps}_{j, \bf{n}}$ are continuous.
    Take $q \in \mathbb{N}$.
    Then, for $j = 0$, we know that $\bf{v}^{q,\eps}_{0, {n}_0}$ is continuous for all ${n}_0 \in \mathbb{N}$ since $\bf{v}^{q,\eps}_{0, 0}$ is continuous by \eqref{eq:splines_initial} as it is a constant function, and for $ {n}_0 \geq 1$, continuity is obtained by the (Lipschitz) continuity of $\bf{g}$ and by Lemma \ref{thm:splint_bound}.
    Then, for $\bf{v}^{q,\eps}_{1, \bf{n}}$, the initial iteration is again continuous by \eqref{eq:splines_initial} since $\bf{u}^{q,\eps}_{1, 0} (t_1)
        =
        \Lim{t \uparrow t_1}
        \bf{u}^{q,\eps}_{0, \bf{n}_{0}}
        (t)$.
    Repeating the same argument as above, we obtain continuity of $\bf{v}^{q,\eps}_{1,\bf{n}}$ for $n_1 \geq 1$, where we note that continuity at $t=t_1$ is conserved since $\Lim{t\uparrow t_1} \splint{\eps}{t}{\alpha} = \splint{\eps}{t_1}{\alpha} $ by Lemma \ref{lemma:spline_int_error}.
    Finally, repeating the same argumentation for $j \geq 2$, we obtain continuity for all $\bf{v}^{q,\eps}_{j, \bf{n}}$.
    
    Using the bound of Lemma  \ref{thm:splint_bound}, we can now show the convergence of sequence \eqref{eq:local_splines_it} inside the summation of \eqref{eq:y_q_limit_form}:
    \begin{align}
        &\supnorm{t}{A_i}{
        \bf{u}^{q,\eps}_{i, n+1}(t)
        -
        \bf{u}^{q,\eps}_{i, n}(t)
        }
        \\
        &=
        \supnorm{t}{A_i}{
        \splint{\eps}{t}{\alpha}
        \{
        \bf{g}(t,s, \bf{v}^{q,\eps}_{i, \bf{n}}(t))
        -
        \bf{g}(t,s, \bf{v}^{q,\eps}_{i, \bf{n}-1}(t))
        \}
        }
        \\
        &=
        \supnorm{t}{A_i}{
        \splint{{t_i}}{t}{\alpha}
        \{
        \bf{g}(t,s, \bf{u}^{q,\eps}_{i, {n}}(t))
        -
        \bf{g}(t,s, \bf{u}^{q,\eps}_{i, {n}-1}(t))
        \}
        }
        \\
        &\leq
        \Psi_i \supnorm{s,t}{A_i}{
        \bf{g}(t,s, \bf{u}^{q,\eps}_{i, {n}}(s))
        -
        \bf{g}(t,s, \bf{u}^{q,\eps}_{i, {n}-1}(s))
        }
        \\
        &=
        \Psi_i \supnorm{s,t}{A_i}{
        t^{1-\gamma}
        \bf{f}(s, s^{\gamma-1} \bf{u}^{q,\eps}_{i, {n}}(s))
        -
        t^{1-\gamma}
        \bf{f}(s, s^{\gamma-1} \bf{u}^{q,\eps}_{i, {n}-1}(s))}
        \\
        &\leq
        \Psi_i 
        {t}_{i+1}^{1-\gamma}
        \supnorm{s}{A_i}{
        \bf{f}(s, s^{\gamma-1} \bf{u}^{q,\eps}_{i, {n}}(s))
        -
        \bf{f}(s, s^{\gamma-1} \bf{u}^{q,\eps}_{i, {n}-1}(s))}
        \\
        &\leq
        \Psi_i 
        {t}_{i+1}^{1-\gamma}
        K
        \supnorm{s}{A_i}{
         s^{\gamma-1}
        \bf{u}^{q,\eps}_{i, {n}}(s)
        -
        s^{\gamma-1}
        \bf{u}^{q,\eps}_{i, {n}-1}(s)
        }
        \\
        &\leq
        \left(1+\fr{{h}_{i}}{t_i}\right)^{1-\gamma}
        \Psi_i 
        K
        \supnorm{s}{A_i}{
        \bf{u}^{q,\eps}_{i, {n}}(s)
        -
        \bf{u}^{q,\eps}_{i, {n}-1}(s)
        }.
    \end{align}
    Since by assumption $\| (1+\fr{{h}_{i}}{t_i})^{1-\gamma}
        \Psi_i 
        K
    \|_\infty < 1$, Banach's fixed point theorem gives uniform convergence.
    Hence, given $i \in \{0, ..., k\}$, we have convergence for the sequence $(\bf{u}^{q,\eps}_{i, n})_{n\geq 0}$ as $n\to \infty$.
    From \eqref{eq:local_splines_ind_build} and \eqref{eq:local_splines_it} it now follows that $\bf{v}^{q,\eps} : [\eps,T] \to \mathbb{R}^d$ satisfies 
    \begin{align}
        \bf{v}^{q, \eps}(t)
        =
        \bf{\tilde{v}_0}
        +
        \splint{\eps}{t}{\alpha}
        \bf{g}(t,s,\bf{v}^{q,\eps}(s))
        ,
        \quad
        \eps \leq t \leq T
        ,
    \end{align}
    and by transforming back to $\bf{y}^{q,\eps}(t):= t^{\gamma-1}\bf{v}^{q,\eps} (t)$ we thus obtain \eqref{eq:y_q_eps_sol}.
    Finally, we note that continuity of $\bf{v}^{q, \eps}$ is obtained by the uniform convergence of $(\bf{u}^{q, \eps}_{i,n})_{n\geq 0}$ and the continuity at connecting spline points as shown above, providing continuity of $\bf{y}^{q,\eps}$ after transforming back, since $\eps \leq t \leq T$.
\end{proof}
We now present the main convergence results and rates with respect to analytical solutions of the IVP:
\begin{theorem}[Convergence rates]
\label{thm:convergence_rates}
    The approximations $\bf{y}^{q,\eps}$ of Theorem \ref{thm:hilf_splines} satisfy:
    \renewcommand{\theenumi}{(\roman{enumi})}%
    \begin{enumerate}
    \item
    \label{claim:conv_pol}
    \textbf{Convergence in polynomial order.} $\lim_{q\to\infty}\bf{y}^{q, \eps} = \bf{y}^\eps $ with order
        \begin{align}
            \epsweighnorm{
            \bf{y}^{q,\eps}
            -
            \bf{y}^\eps
            }
            =
            \mathcal{O}({q}^{-
            {\alpha}/{2}
            }).
        \end{align}
        \item \label{claim:conv_h} \textbf{Convergence in knot size.} $ \lim_{h\to 0}\bf{y}^{q, \eps} = \bf{y}^\eps$
         with order
        \begin{align}
            \epsweighnorm{
            \bf{y}^{q,\eps}
            -
            \bf{y}^\eps
            }
            =
            \mathcal{O}(h^\alpha),
        \end{align}
        where $h := \max_{i \in \{0, \ldots, k\}}h_i$ is the maximum knot size of $\mathcal{A}$.
    \end{enumerate}
    Here, $\bf{y}^\eps : [\eps, T] \to \Rbb^d$ is the solution to the shifted analytical integral equation:
    \begin{align}
        \label{eq:y_eps_sol}
        \bf{y}^{\eps}(t)
        =
        \fr{\bf{\tilde{y}_0}}{\Gamma(\gamma)}
        t^{\gamma-1}
        +
        \rlint{\eps}{t}{\alpha}
        \bf{f}(s,
        \bf{y}^{q,\eps}(s)
        )
        ,
        \quad
        \eps \leq t \leq T
        .
    \end{align}
    Furthermore, we have:
    \renewcommand{\theenumi}{(\roman{enumi})}%
    \begin{enumerate}
    \setcounter{enumi}{2}
        \item 
        \label{claim:conv_eps}
        \textbf{{Convergence to the analytical solution.}}
        $\lim_{\eps \downarrow 0} \bf{y}^\eps = \bf{y}$ with order
        \begin{align}
            \epsweighnorm{\bf{y}^\eps -
            \bf{y}}
            = \mathcal{O}(\eps^{\alpha}).
        \end{align}
    \end{enumerate}
\end{theorem}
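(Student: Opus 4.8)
The plan is to treat the three claims by comparing, in each case, the two integral equations satisfied by the objects involved and estimating the difference of the right-hand sides. Throughout I will identify $\epsweighnorm{\cdot}$ with the sup-norm on $[\eps,T]$ via Proposition~\ref{prop:eps_norms_eq}, write $\bf{y}^\eps(t)=t^{\gamma-1}\bf{v}^\eps(t)$ for the transform of the unique solution $\bf{v}^\eps$ of the shifted equation \eqref{eq:hilf_ivp_eps_shifted} built in the proof of Theorem~\ref{thm:hilf_splines} (equivalently, \eqref{eq:y_eps_sol} with $\bf{y}^\eps$ in place of $\bf{y}^{q,\eps}$ under the integral), and use the componentwise Lipschitz bound of Theorem~\ref{thm:ex_un} in the scalar form $\|\bf{f}(t,\bf{y}_1)-\bf{f}(t,\bf{y}_2)\|_\infty\le\|K\|_\infty\|\bf{y}_1-\bf{y}_2\|_\infty$.

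\emph{Claims \ref{claim:conv_pol} and \ref{claim:conv_h}.} I will keep $\eps$ fixed. Subtracting \eqref{eq:y_q_eps_sol} from the equation for $\bf{y}^\eps$ and inserting $\pm\rlint{\eps}{t}{\alpha}\bf{f}(s,\bf{y}^{q,\eps}(s))$ splits the error as
\[
\bf{y}^{q,\eps}(t)-\bf{y}^\eps(t)
=\bigl[\splint{\eps}{t}{\alpha}\bf{f}(s,\bf{y}^{q,\eps}(s))-\rlint{\eps}{t}{\alpha}\bf{f}(s,\bf{y}^{q,\eps}(s))\bigr]
+\rlint{\eps}{t}{\alpha}\bigl[\bf{f}(s,\bf{y}^{q,\eps}(s))-\bf{f}(s,\bf{y}^\eps(s))\bigr].
\]
Since $\bf{y}^{q,\eps}\in C[\eps,T]$, Theorem~\ref{thm:ex_un}(1) together with Proposition~\ref{prop:eps_norms_eq} places $\bf{f}(\cdot,\bf{y}^{q,\eps}(\cdot))$ in $C[\eps,T]$, so Lemma~\ref{lemma:spline_int_error} bounds the first bracket in sup-norm by $a_{q,h}:=(h/\sqrt q)^\alpha\tfrac{5}{2\Gamma(\alpha+1)}\Cnorm{\eps}{T}{\bf{f}(\cdot,\bf{y}^{q,\eps}(\cdot))}$, and Lipschitz continuity plus a (generalised) fractional Gr\"onwall inequality on $[\eps,T]$ give $\Cnorm{\eps}{T}{\bf{y}^{q,\eps}-\bf{y}^\eps}\le a_{q,h}\sum_{n\ge0}(\|K\|_\infty T^\alpha)^n/\Gamma(n\alpha+1)$. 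Bounding $\Cnorm{\eps}{T}{\bf{f}(\cdot,\bf{y}^{q,\eps}(\cdot))}\le\Cnorm{\eps}{T}{\bf{f}(\cdot,\bf{y}^\eps(\cdot))}+\|K\|_\infty\Cnorm{\eps}{T}{\bf{y}^{q,\eps}-\bf{y}^\eps}$ inside $a_{q,h}$ and absorbing the $\Cnorm{\eps}{T}{\bf{y}^{q,\eps}-\bf{y}^\eps}$-term (licit once $q$ is large, resp.\ $h$ small, as then $(h/\sqrt q)^\alpha$ drops below a fixed threshold) yields $\Cnorm{\eps}{T}{\bf{y}^{q,\eps}-\bf{y}^\eps}=\mathcal O((h/\sqrt q)^\alpha)$, hence $\epsweighnorm{\bf{y}^{q,\eps}-\bf{y}^\eps}\le T^{1-\gamma}\Cnorm{\eps}{T}{\bf{y}^{q,\eps}-\bf{y}^\eps}=\mathcal O(h^\alpha q^{-\alpha/2})$. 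Keeping the knot collection fixed and letting $q\to\infty$ gives \ref{claim:conv_pol}; keeping $q$ fixed and refining the mesh so that $h\to0$ gives \ref{claim:conv_h}. This self-improving step removes the need for a separate uniform-in-$(q,h)$ bound on $\bf{y}^{q,\eps}$; the constants are independent of $q$ (since $\Psi_i$ in Theorem~\ref{thm:hilf_splines} is nonincreasing in $q$) and of the mesh (since the bound of Lemma~\ref{lemma:spline_int_error} is valid for any knot collection and $\bf{y}^\eps$ does not depend on the mesh).

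\emph{Claim \ref{claim:conv_eps}.} Now $\bf{y}$ solves \eqref{eq:eq_int_eq} with $\astart=0$ and $\bf{y}^\eps$ the $\eps$-shifted equation, so for $\eps\le t\le T$, writing $\int_0^t=\int_0^\eps+\int_\eps^t$,
\[
\bf{y}(t)-\bf{y}^\eps(t)
=\frac{1}{\Gamma(\alpha)}\int_0^\eps(t-s)^{\alpha-1}\bf{f}(s,\bf{y}(s))\dto s
+\rlint{\eps}{t}{\alpha}\bigl[\bf{f}(s,\bf{y}(s))-\bf{f}(s,\bf{y}^\eps(s))\bigr].
\]
For the first term I will use $|\bf{f}(s,\bf{y}(s))|\le s^{\gamma-1}\|\bf{f}(\cdot,\bf{y}(\cdot))\|_{C_{1-\gamma}[0,T]}$ (valid by Theorem~\ref{thm:ex_un}(1)) and Proposition~\ref{prop:int_pol_inc_0_t} with $k=\gamma-1$, which gives $\tfrac1{\Gamma(\alpha)}\int_0^\eps(t-s)^{\alpha-1}s^{\gamma-1}\dto s=\tfrac{t^{\alpha+\gamma-1}}{\Gamma(\alpha)}\mathrm{B}_{\eps/t}(\gamma,\alpha)$; after multiplying by the weight $t^{1-\gamma}$, the elementary bound $\mathrm{B}_{z}(\gamma,\alpha)\le z^{\gamma}\mathrm{B}(\gamma,\alpha)$ together with $\alpha-\gamma=-\beta(1-\alpha)\le0$ and $\eps\le t$ shows that the weighted first term is at most $\tfrac{\mathrm{B}(\gamma,\alpha)}{\Gamma(\alpha)}\|\bf{f}(\cdot,\bf{y}(\cdot))\|_{C_{1-\gamma}[0,T]}\,\eps^\alpha$, uniformly in $t$. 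For the second term I will set $\Phi(t):=t^{1-\gamma}\|\bf{y}(t)-\bf{y}^\eps(t)\|_\infty$ and obtain from Lipschitz continuity
\[
\Phi(t)\le c\,\eps^\alpha+\frac{\|K\|_\infty}{\Gamma(\alpha)}\,t^{1-\gamma}\!\int_\eps^t(t-s)^{\alpha-1}s^{\gamma-1}\Phi(s)\dto s,
\]
and then iterate this inequality: each application of the weighted Volterra operator is evaluated via Proposition~\ref{prop:int_pol_inc_0_t} (with $b=t$) and contributes a factor $\mathrm{B}(\gamma+n\alpha,\alpha)$, the product telescopes to $\Gamma(\gamma)\Gamma(\alpha)^n/\Gamma(\gamma+n\alpha)$, and the remainder term vanishes since $\Phi$ is bounded on the compact $[\eps,T]$, so that $\|\Phi\|_{C[\eps,T]}\le c\,\Gamma(\gamma)\,\eps^\alpha\sum_{n\ge0}(\|K\|_\infty T^\alpha)^n/\Gamma(\gamma+n\alpha)=\mathcal O(\eps^\alpha)$.

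The step I expect to be the main obstacle is this last Gr\"onwall estimate. The norm $\epsweighnorm{\cdot}$ carries the weight $t^{1-\gamma}$, which is singular at $t=0$, rather than the shifted weight $(t-\eps)^{1-\gamma}$, and the crude estimate $s^{\gamma-1}\le\eps^{\gamma-1}$ under the integral produces an amplification factor $\sum_n(\|K\|_\infty T^\alpha\eps^{\gamma-1})^n/\Gamma(n\alpha+1)$ that diverges as $\eps\downarrow0$ and destroys the $\eps^\alpha$ rate; one is therefore forced to keep the weight $s^{\gamma-1}$ inside the integral and iterate the \emph{weighted} operator, using the exact incomplete-Beta evaluations of Propositions~\ref{prop:int_pol_inc_0_t}--\ref{prop:int_pol_inc_t_T} so that the Beta factors telescope into a convergent two-parameter Mittag-Leffler-type series --- the same mechanism underlying the weighted-space fixed-point analysis of \citet{Furati2012}. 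The remaining ingredients (the fractional Gr\"onwall inequality, continuity and boundedness of $\bf{f}(\cdot,\bf{y}^{q,\eps}(\cdot))$ and $\bf{f}(\cdot,\bf{y}^\eps(\cdot))$ on $[\eps,T]$, and order/mesh independence of the constants in Claims~\ref{claim:conv_pol}--\ref{claim:conv_h}) are routine.
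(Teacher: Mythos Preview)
Your proposal is correct and follows the paper's architecture almost exactly: for (i)--(ii) you use the same add-and-subtract decomposition, invoke Lemma~\ref{lemma:spline_int_error} on the spline-vs-exact-integral term, and close with a fractional Gr\"onwall inequality yielding a Mittag--Leffler factor; for (iii) you use the same $\int_0^\eps+\int_\eps^t$ split and the same incomplete-Beta estimate $\mathrm{B}_{\eps/t}(\gamma,\alpha)\le(\eps/t)^\gamma\mathrm{B}(\gamma,\alpha)$ leading to the $\eps^\alpha$ rate. Two points of genuine difference are worth noting. First, for (i)--(ii) the paper simply bounds the spline-integration error by a constant involving $\overline{\|\bf f\|}_{C_{1-\gamma}[0,T]}$ without addressing its potential $(q,h)$-dependence through $\bf y^{q,\eps}$; your self-improving absorption argument is a cleaner way to secure uniformity of the constants. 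Second, for (iii) the paper does \emph{not} iterate the weighted Volterra operator: it writes the inequality for $e(t)$ with kernel $\tfrac{\|K\|_\infty}{\Gamma(\alpha)}(t/s)^{1-\gamma}(t-s)^{\alpha-1}$ and applies a classical Gr\"onwall inequality directly, obtaining the exponential bound $e(t)\le\Theta_\eps\exp\{\|K\|_\infty\mathrm{B}(\gamma,\alpha)T^\alpha/\Gamma(\alpha)\}$ rather than your two-parameter Mittag--Leffler series $\Gamma(\gamma)\sum_{n\ge0}(\|K\|_\infty T^\alpha)^n/\Gamma(\gamma+n\alpha)$. Your iteration is the more transparent route given that the kernel is both singular at $s=t$ and $t$-dependent, and it makes explicit the telescoping Beta mechanism you flagged as the main obstacle; the paper's shortcut gives the same $\mathcal O(\eps^\alpha)$ conclusion but leans on a Gr\"onwall variant whose applicability to this kernel is left implicit.
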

\begin{proof}
    Consider $\bf{y}^{q,\eps}$ and $\bf{y}^\eps$ such that
    \eqref{eq:y_q_eps_sol}  and \eqref{eq:y_eps_sol} hold respectively.
    Then,
    \begin{align}
        \bf{y}^{q,\eps}(t)
        -
        \bf{y}^\eps(t)
        = 
        (\splint{\eps}{t}{\alpha} 
        - \rlint{\eps}{t}{\alpha}) \bf{f}(t, \bf{y}^{q,\eps}(t))
        + \rlint{\eps}{t}{\alpha} \{ 
        \bf{f}(t, \bf{y}^{q,\eps}(t))
        -
        \bf{f}(t, \bf{y}^\eps(t))\}.
    \end{align}
    And thus, by Lemma \ref{lemma:spline_int_error}:
    \begin{align}
        \label{eq:local_splines_before_gronwall}
        \overline{
        |
        \bf{y}^{q,\eps}(t)
        -
        \bf{y}^\eps(t)
        |}
        \leq
        \left( \fr{h}{\sqrt{q}} \right)^\alpha
        \fr{5\,
        \overline{\|\bf{f} \|}_{C[0,T]_{1-\gamma}}
        }{2\,\Gamma(\alpha+1) }
        +
        ||K||_\infty\,
        \rlint{\eps}{t}{\alpha}
        \overline{|\bf{y}^{q,\eps}(t)-\bf{y}^\eps(t)|}
        .
    \end{align}
    Here, $\overline{\bf{v}} := \sup_{i \in \{1, ..., d\}} v_i$ denotes the supremum over all components of the $d$-dimensional vector $\bf{v}$, making \eqref{eq:local_splines_before_gronwall} a scalar equation.
    Since $\left( \fr{h}{\sqrt{q}} \right)^\alpha
        \fr{5\,
        \overline{\|\bf{f} \|}_{C_{1-\gamma}[0,T]}
    }{2\,\Gamma(\alpha+1) }$ does not depend on $t$, we can apply the fractional Grönwall's inequality (Corollary 2, \cite{Ye2007}) to obtain
    \begin{align}
    \label{eq:local_splines_gronwall}
     \overline{
        |
        \bf{y}^{q,\eps}(t)
        -
        \bf{y}^\eps(t)
        |}
        \leq
        \left( \fr{h}{\sqrt{q}} \right)^\alpha
        \fr{5\,\overline{\| \bf{f} \|}_{C_{1-\gamma}[0,T]}
        }{2\,\Gamma(\alpha+1) }
        \mathrm{E}_\alpha(t^\alpha ||K||_\infty),
    \end{align}
    where $\mathrm{E}_\alpha(z) = \sum_{j=0}^\infty \fr{z^j}{\Gamma(j\alpha+1)}$ denotes the Mittag-Leffler function.
    Then, since $\mathrm{E}_\alpha(t^\alpha ||K||_\infty)$ is independent of $q$ and $h$, and since the supremum is taken over all dimensions, we obtain convergence orders of
    \begin{align}
        \sup_{t\in [0,T]}
        \epsweighnorm{
        \bf{y}^{q,\eps}
        -
        \bf{y}^\eps
        }
        =
        \mathcal{O}(h^\alpha)
        ,
        \quad 
        \text{and}
        \quad
        \epsweighnorm{
        \bf{y}^{q,\eps}
        -
        \bf{y}^\eps
        }
        =
        \mathcal{O}({q}^{-
        {\alpha}/{2}
        }),
    \end{align}
    proving claim \ref{claim:conv_pol} and \ref{claim:conv_h}.
    
    Finally, we prove convergence to the analytical solution as in claim \ref{claim:conv_eps}.
    Consider analytical solution $\bf{y} : [0,T] \to \Rbb^d$ satisfying the equivalent integral equation \eqref{eq:eq_int_eq} of FIVP \eqref{eq:ivp}.
    Then, for $\eps\leq t \leq T$:
    \begin{align}
        t^{1-\gamma}
        \left[
        \bf{y}(t) - \bf{y}^\eps(t)
        \right]
        &=
        t^{1-\gamma}
        \rlint{0}{t}{\alpha}
        \bf{f}(t, \bf{y}(t))
        -
        t^{1-\gamma}
        \rlint{\eps}{t}{\alpha}
        \bf{f}(t, \bf{y}^\eps(t))
        \\
        &=
        t^{1-\gamma}
        \rlint{0}{\eps}{\alpha}
        \bf{f}(t, \bf{y}(t))
        +
        t^{1-\gamma}
        \rlint{\eps}{t}{\alpha}
        [
        \bf{f}(t, \bf{y}(t))
        -
        \bf{f}(t, \bf{y}^\eps(t))
        ].
    \end{align}
    By the Lipschitz condition on $\bf{f}$ and the triangle inequality, we have:
    \begin{align}
        \label{eq:hilf_shift_main_ineq}
        t^{1-\gamma}
        |\bf{y}(t) - \bf{y}^\eps(t)|
        \leq
        t^{1-\gamma}
        |\rlint{0}{\eps}{\alpha}
        \bf{f}(t, \bf{y}(t))|
        +
        t^{1-\gamma}
        \rlint{\eps}{t}{\alpha}
        K
        |
        \bf{y}(t)
        -
        \bf{y}^\eps(t)
        |.
    \end{align}
    By making use of the H$\ddot{\text{o}}$lder inequality,
    the first term in \eqref{eq:hilf_shift_main_ineq} can be written as follows:
    \begin{align}
        t^{1-\gamma}
        |\rlint{0}{\eps}{\alpha}
        \bf{f}(t, \bf{y}(t))|
        &\leq
        \fr{t^{1-\gamma}}{\Gamma(\alpha)}
        \int_0^\eps 
        |(t-s)^{\alpha-1}\bf{f}(t, \bf{y}(t)) |\dto s
        \\
        &=
        \fr{t^{1-\gamma}}{\Gamma(\alpha)}
        \int_0^\eps 
        |(t-s)^{\alpha-1}s^{\gamma-1}\bf{f}(t, \bf{y}(t))s^{1-\gamma} |\dto s
        \\
        &\leq
        \fr{t^{1-\gamma}}{\Gamma(\alpha)}
        \int_0^\eps 
        (t-s)^{\alpha-1}s^{\gamma-1}
        \dto s \,
        \overline{\|
        \bf{f}
        \|}_{C_{1-\gamma}[0,T]}.
    \end{align}
    By Proposition \ref{prop:int_pol_inc_0_t}, we have 
    $
        \fr{t^{1-\gamma}}{\Gamma(\alpha)}
        \int_0^\eps 
        (t-s)^{\alpha-1}s^{\gamma-1}
        \dto s 
        =
        \fr{t^{\alpha}}{\Gamma(\alpha)}
        \mathrm{B}_{\eps/t}(\gamma, \alpha).
    $
    Hence, 
    \begin{align}
        t^{1-\gamma}
        |\rlint{0}{\eps}{\alpha}
        \bf{f}(t, \bf{y}(t))|
        \leq
        \fr{t^{\alpha}}{\Gamma(\alpha)}
        \mathrm{B}_{\eps/t}(\gamma, \alpha)
        \| \bf{f} \|_{C_{1-\gamma}[0,T]}
        \leq
        \underbrace{
        \sup_{t\in [\eps, T]}
        \fr{t^{\alpha}}{\Gamma(\alpha)}
        \mathrm{B}_{\eps/t}(\gamma, \alpha)
        \overline{\|
        \bf{f}
        \|}_{C_{1-\gamma}[0,T]}
        }_{=:\Theta_\eps}.
    \end{align}
    Now, denoting
    $ {e} (t):=
    \overline{t^{1-\gamma} | {y}_i(t) - {y}_i^\eps(t)|}$, we have
    \begin{align}
        t^{1-\gamma}
        \rlint{\eps}{t}{\alpha}
        K
        |
        \bf{y}(t)
        -
        \bf{y}^\eps(t)
        |
        &\leq
        \fr{1}{\Gamma(\alpha)}
        \int_\eps^t
        t^{1-\gamma}
        (t-s)^{\alpha-1}
        \|K\|_\infty
        s^{\gamma-1} {e} (s)
        \dto s.
    \end{align}
    Substituting these bounds in \eqref{eq:hilf_shift_main_ineq} then yields:
    \begin{align}
         {e} (t)
        \leq 
        \Theta_\eps
        +
        \int_\eps^t
        \fr{\|K\|_\infty}{\Gamma(\alpha)}
        \left(\fr{t}{s}\right)^{1-\gamma}
        (t-s)^{\alpha-1}
         {e} (s)
        \dto s.
    \end{align}
    Since $\Theta_\eps$ is nondecreasing, we can apply Grönwall's inequality to obtain:
    \begin{align}
    \label{eq:gronwall_result_part}
         {e} (t)
        \leq
        \Theta_\eps
        \mathrm{exp}\left\{
        \int_\eps^t
        \fr{\|K\|_\infty}{\Gamma(\alpha)}
        \left(\fr{t}{s}\right)^{1-\gamma}
        (t-s)^{\alpha-1}
        \dto s
        \right\}
        \leq
        \Theta_\eps
        \mathrm{exp}\left\{
        \fr{\|K\|_\infty 
        \mathrm{B}(\gamma, \alpha)T^{\alpha}
        }{\Gamma(\alpha)}
        \right\},
    \end{align}
    where the last expression follows from Proposition \ref{prop:int_pol_inc_t_T}.
    Now, we are left to prove
    $\lim_{\eps \downarrow 0} \Theta_\eps = 0$.
    First, observe
    \begin{align}
        \lim_{\eps \downarrow 0} \Theta_\eps 
        &=
        \fr{\overline{\|
        \bf{f}
        \|}_{C_{1-\gamma}[0,T]}}{\Gamma(\alpha)}
        \lim_{\eps \downarrow 0} 
        \sup_{t\in [\eps, T]}
        {t^{\alpha}}
        \int_0^{\eps/t}
        u^{\gamma-1}
        (1-u)^{\alpha-1}
        \dto u.
    \end{align}
    Using the substitution $s : = t/\eps u$, we have
    \begin{align}
        \int_0^{\eps/t}
        u^{\gamma-1}
        (1-u)^{\alpha-1}
        \dto u
        &=
        \left(\fr{\eps}{t}\right)^\gamma
        \int_0^{1}
        s^{\gamma-1}
        (1-s \eps / t)^{\alpha-1}
        \dto s
        \\
        &\leq 
        \left(\fr{\eps}{t}\right)^\gamma
        \int_0^{1}
        s^{\gamma-1}
        (1-s )^{\alpha-1}
        \dto s
        =
        \left(\fr{\eps}{t}\right)^\gamma
        \mathrm{B}(\gamma, \alpha),
    \end{align}
    where the last inequality follows from 
    the fact that $(1-s \eps / t)^{\alpha-1}$ is decreasing in $t \in [\eps, T]$.
    Hence, combined we have 
    \begin{align}
        \lim_{\eps \downarrow 0} \Theta_\eps 
        &\leq
        \fr{\overline{\|
        \bf{f}
        \|}_{C_{1-\gamma}[0,T]}}{\Gamma(\alpha)}
        \lim_{\eps \downarrow 0} 
        \sup_{t\in [\eps, T]}
        {t^{\alpha}}
        \left(\fr{\eps}{t}\right)^\gamma
        \mathrm{B}(\gamma, \alpha).
    \end{align}
    Since $\alpha - \gamma \leq 0$, we have $
    \sup_{t\in [\eps, T]}
        {t^{\alpha}}
        \left(\fr{\eps}{t}\right)^\gamma
        =
        \sup_{t\in [\eps, T]}
        {t^{\alpha-\gamma}}
        \eps^\gamma
        = \eps^\alpha
        $.
    Hence, $\lim_{\eps \downarrow 0 } \Theta_\eps = 0$ with $\mathcal{O}(\eps^\alpha)$.
    Combined with inequality \eqref{eq:gronwall_result_part}, we thus have $\lim_{\downarrow \eps} \|{e}\|_{C[\eps, T]} = 0$. 
    Hence, $\lim_{\downarrow \eps} \epsweighnorm{\bf{y}^\eps - \bf{y}} = \bf{0}$, proving convergence to the analytical solution of claim \ref{claim:conv_eps}.
\end{proof}
\begin{remark}[Convergence to the analytical solution]
    Under the assumptions of Theorem \ref{thm:ex_un} and \ref{thm:hilf_splines}, we thus have by Theorem \ref{thm:convergence_rates} that
    \begin{gather}
        \lim_{\eps \downarrow 0} 
        \lim_{\fr{h}{q} \downarrow 0} 
        \epsweighnorm{\bf{y}^{q,\eps} - \bf{y}}
        \\
        \leq
        \lim_{\eps \downarrow 0} 
        \lim_{\fr{h}{q} \downarrow 0} 
        \left(
        \epsweighnorm{\bf{y}^{q,\eps} - \bf{y}^\eps}
        +
        \epsweighnorm{\bf{y}^{\eps} - \bf{y}}
        \right)
        =
        \bf{0}.
    \end{gather}
\end{remark}
\section{Numerical implementation}
\label{sec:implementation}
In this section, we present a numerical implementation to efficiently compute approximations to the solutions of FIVP \eqref{eq:ivp}.
As a result, we obtain the approximating splines sequences as constructed in Theorem~\ref{thm:hilf_splines} by operationalizing the splines and fractional integration operators.

Given a function $f:[\eps,T]\to \mathbb{R}$ and a size $k+1$ knot collection $\mathcal{A}$ on $[\eps, T]$, the splines approximation operator can directly be implemented from Definition \ref{def:bernstein_operator} and \ref{def:spline_operator}:
\begin{align}
    \spline{q} 
    f (t)
    &= 
    \sum_{i=0}^k
    \sum_{j=0}^q
    \sum _{\ell=j }^{q}
    a_{i,j}
    {\binom {q}{\ell }}{\binom {\ell }{j }}(-1)^{\ell -j }
    \,
    \One_{A_i}
    \,
    s_{i}(t)^{\ell },
    \label{eq:spline_full_bernstein_formulation}
\end{align}
where $a_{i,j} = f(t_i + j (t_{i+1}-t_i)/q)$ and $s_{i} : A_i \to [0,1]$ is given by $s_{i}(t) := \fr{t - t_i}{t_{i+1}-t_i}$.

Then, for a given Bernstein spline $S : [\eps,T]\to \mathbb{R}$ with Bernstein coefficients $a_{i,j}$, we derive a closed-form expression for fractional integration.
First, we consider the expression $\One_{A_i}(t)\,
    s_i(t)^{\ell }$ inside of the summation of \eqref{eq:spline_full_bernstein_formulation}.
For $t < t_i$, the integral must be zero as the fractional integral of the zero function is zero.
Now, for $t\geq t_i$, the integral can be written out as follows:
\begin{align}
    \rlint{\eps}{t}{\alpha}
    \{
    \One_{A_i}(t)\,
    s_i(t)^{\ell }
    \}(t)
    &=
    \fr{1}{\Gamma(\alpha)}
    \int_{\eps}^{t}
    (t-x)^{\alpha-1}
    \One_{A_i}(x)\,
    s_i(x)^{\ell }
    \dto x
    \\
    &=
    \fr{1}{\Gamma(\alpha)}
    \int_{t_i}^{t \land t_{i+1}}
    (t-x)^{\alpha-1}
    s_i(x)^{\ell }
    \dto x
    \\
    &=
    \fr{(t_{i+1}-t_i)}{\Gamma(\alpha)}
    \int_{0}^{s_i(t)\land 1}
    (t-s_i^{-1}(z))^{\alpha-1}
    z^{\ell }
    \dto z \label{eq:bs_integral_trans_1}
    \\
    &=
    \fr{(t_{i+1}-t_i)^\alpha}{\Gamma(\alpha)}
    \int_{0}^{s_i(t)\land 1}
    \left(
    s_i(t)
    -z
    \right)^{\alpha-1}
    z^{\ell }
    \dto z.
\end{align}
Here, $a \land b:=\min(a,b)$ and we take the substitution $z := s_i(x)$.
The notation $s_i^{-1} : [0,1] \to [t_i, t_{i+1}]$ is used for the inverse $s_i^{-1}(z) = (t_{i+1}-t_i)z+t_i$.
Finally, using $\vartheta := z / s_i(t)$ gives:
\begin{align}
\rlint{\eps}{t}{\alpha}
    \{\One_{A_i}(t)\,
    s_i(t)^{\ell }\}(t)
    &=
    \fr{(t_{i+1}-t_i)^\alpha}{\Gamma(\alpha)}
    \int_{0}^{s_i(t)\land 1}
    \left(
    s_i(t)
    -z
    \right)^{\alpha-1}
    z^{\ell }
    \dto z
    \\
    &=
    \fr{(t_{i+1}-t_i)^\alpha s_i(t)^{\alpha+\ell}}{\Gamma(\alpha)}
    \int_{0}^{1 \land 1/s_i(t)}
    \left(1-\vartheta
    \right)^{\alpha-1}
    \vartheta^{\ell }
    \dto \vartheta
    \\
     &=
    \fr{(t-t_{i})^{\alpha+\ell}}{(t_{i+1}-t_i)^\ell \,\Gamma(\alpha)}
    \mathrm{B}_{1 \land 1/s_i(t)}(\ell+1, \alpha).\label{eq:int_res_spline}
\end{align}
Now, substituting \eqref{eq:int_res_spline} in \eqref{eq:spline_full_bernstein_formulation}, we have, by linearity of integration:
\begin{align}
\label{eq:spline_int_worked_out}
    \rlint{\eps}{t}{\alpha}
    S(t)
    =
    \sum_{i=0}^k
    \sum_{j=0}^q
    \sum _{\ell=j }^{q}
    a_{i,j}
    {\binom {q}{\ell }}{\binom {\ell }{j }}
    \fr{(t-t_{i})^{\alpha+\ell} \, \mathrm{B}_{1 \land 1/s_i(t)}(\ell+1, \alpha) }
    {(-1)^{j-\ell }(t_{i+1}-t_i)^\ell \,\Gamma(\alpha)}.
\end{align}
In order to efficiently obtain the numerical solutions constructed in Theorem~\ref{thm:hilf_splines}, we provide a vectorized software implementation.
Given a spline approximation of order $q$ such as obtained in \eqref{eq:spline_full_bernstein_formulation} with corresponding coefficient matrix $A^\supnum{q}$:
\begin{align}
    {A}^\supnum{q}_{i,j}
    =
    a_{i,j}, 
    \quad i \in \{0, ..., k\}, 
    \quad j \in \{0, ..., q\},
\end{align}
we are interested in computing the right hand side of expression \eqref{eq:spline_int_worked_out} for $p+1$ evaluation points $ 
\bf{\tilde{t}}=
\begin{bmatrix}
    \tilde{t}_0 & \tilde{t}_1 & ... & \tilde{t}_p
\end{bmatrix}$.
First, we vectorize the expression for the Bernstein basis coefficients by writing the size $(q+1) \times (q+1)$ matrix ${B}^\supnum{q}$ as:
\begin{align}
    B^\supnum{q}_{j, \ell}
    =
    \begin{cases}
        \binom{q}{\ell} \binom{\ell}{j}(-1)^{\ell-j}
        \quad & \textrm{ if } \ell \geq j,\\
        0 \quad & \textrm{ else},
    \end{cases}
    \quad \textrm{ for } 
    j, \ell \in \{0, ..., q\}.
    \label{eq:impl_B}
\end{align}
For the integration basis elements for a spline of order $q$ with evaluations at evaluation points $\bf{\tilde{t}}$, we construct a tensor ${J}^\supnum{q, \tilde{\bf{t}}}$ of size $(k+1) \times (q+1) \times (p+1)$, with entries defined as:
\begin{align}
\label{eq:impl_J}
    J^\supnum{q, \tilde{\bf{t}}}_{i, \ell, m}
    =
    \fr{(\tilde{t}_m-t_i)^{\alpha+\ell}\, \mathrm{B}_{1\land 
    1/s_i(\tau_m)
    }(\ell+1, \alpha)}
    {(t_{i+1}-t_i)^\ell \, \Gamma(\alpha)}
    ,
    \quad \textrm{ for } \quad
    \begin{cases}
    i \in \{0, ..., k\},\\ \ell \in \{0, ..., q\}, \\ 
    m \in \{0, ..., p\}.
    \end{cases}
\end{align}
For the numerical implementation of the incomplete beta function, we use the Python implementation as provided in the Scipy function \texttt{scipy.special.betainc}.
Now, the result of the fractional integral of spline $S$ at points $\tilde{t}_0, \ldots, \tilde{t}_p$ can be given as
\begin{align}
    \begin{bmatrix}
        \rlint{\eps}{\tilde{t}_0}{\alpha}
    S(\tilde{t}_0)
    &
    \rlint{\eps}{\tilde{t}_1}{\alpha}
    S(\tilde{t}_1)
    & 
    \cdots
    &
    \rlint{\eps}{\tilde{t}_p}{\alpha}
    S(\tilde{t}_p)
    \end{bmatrix}
    =
    ({A^\supnum{q}}{B^\supnum{q}})\cdot {J^\supnum{q, \tilde{\bf{t}}}},
    \label{eq:num_eval_points}
\end{align}
where the operation $\,\cdot\,$ denotes the tensor dot product with the first two dimensions of ${J}$, thus yielding a total resulting vector of size $p+1$ as required.
This tensor product can be efficiently implemented using paralellization.
In our implementation, we use the Python Numpy method \texttt{numpy.einsum}.
Note that to obtain coefficients $a_{i,j}$ for a spline of order $q$, we can construct an evaluation point matrix  $\tau_{i,j}$ of size $(k+1) \times (q+1)$:
\begin{align}
    \tau^\supnum{q}_{i,j}
    =
    t_i + j (t_{i+1}-t_i)/q
    , \quad
    i \in \{0, \ldots , k\},
    \quad
    j \in \{0, \ldots , q\}
    .
    \label{eq:impl_t}
\end{align}
We remark that the matrix $\tau^\supnum{q}$ corresponds to a vector of size $(k+1)(q+1)$ of the form $\tilde{\bf{t}}$ as used before.
In the pseudocode, we will use the notation $J^\supnum{q, \tau^\supnum{q}}$ to denote that the resulting evaluation points such as in \eqref{eq:num_eval_points} are recast to a matrix output of size $(k+1)\times (q+1)$.

\begin{algorithm}[p]
        \KwData{
            \begin{tabular}{ll}
            $\mathbf{f} : [0,T] \times \mathbb{R}^d \to \mathbb{R}^d$ & differential equation system \\  
            $\alpha \in (0,1)$ & Hilfer-derivative order \\  
            $\beta \in [0,1]$ & Hilfer-derivative type \\  
            $\mathbf{\tilde{y}}_0 \in \mathbb{R}^d$ & initial values \\  
            $\varepsilon \geq 0$ & time domain shift \\  
            $\mathcal{A}$ & knot collection of size $k+1$ on $[\varepsilon, T]$ \\  
            $q, q' \in \mathbb{N}$ & spline order, higher calculation spline order 
            \end{tabular}
        }
        \KwResult{=
            $\text{Spline solution $\bf{y}^{q,\eps}  : [\eps , T] \to \mathbb{R}^d$.}$
        }
            $\textrm{Initialize: } B^\supnum{q'}, \tau^\supnum{q}, \tau^\supnum{q'} \text{ and } J^\supnum{q', \tau^\supnum{q}} \,\, \text{(See \eqref{eq:impl_B} - \eqref{eq:impl_t})}$\;
            $A^\supnum{q}_{m, i, j} \gets 0
            \text{ for }
                m\in \{0, ..., d-1\}, 
                i \in \{0, ..., k\},
                j \in \{0, ..., q\}
            $\;
            $\gamma \gets \alpha+\beta-\alpha\beta $\;
            $\bf{\tilde{v}}_0 \gets 
            \bf{\tilde{y}}_0 / \Gamma(\gamma)$\;
        \For{$i \in \{0, \ldots, k\}$}{
        $\mathcal{I}_k \gets [(q+1)i, \cdots, (q+1)(i+1)]$\;
        \uIf{i = 0}{$A^\supnum{q}_{:, i, :} \gets \bf{\tilde{v}}_0$\;}
        \Else{$A^\supnum{q}_{:, i, :} \gets A^\supnum{q}_{:, i-1, q}$\;}
        \For{$m \in \{0, ..., d-1\}$}{
            $\tilde{A}^\supnum{q'}_\mathrm{b} \gets S^{q'}_{\mathcal{A}}\, \{f_m ( (\tau^\supnum{q}_{0:(i-1), :})^{\gamma-1} * {A}_{m,0:(i-1),:})\}
            (\tau^\supnum{q'}_{0:i-1, :})$\label{line:A_b}\;
            $\tilde{I}^\supnum{q}_\mathrm{b} \gets (\tilde{A}^\supnum{q'}_\mathrm{b} B^\supnum{q'}) \cdot J^\supnum{q', \tau^\supnum{q}}_{0:i-1, :, \mathcal{I}_k}$\;
            \label{line:I_b}
            \While{\label{line:conv} $! \texttt{ {\upshape  CONV}}$}{
              $\tilde{A}^\supnum{q'}_\mathrm{c} \gets S^{q'}_{\mathcal{A}}\, \{f_m ((\tau^\supnum{q}_{i, :})^{\gamma-1} * {A}_{m,i,:})\}(\tau^\supnum{q'}_{i, :})$\;
              $\tilde{I}^\supnum{q}_\mathrm{c} 
              \gets (\tilde{A}^\supnum{q'}_\mathrm{c}B^\supnum{q'}) \cdot J^{\supnum{q', \tau^\supnum{q}}}_{i,:,\mathcal{I}_k}$
              \label{line:I_c}\;
            $A^\supnum{q}_{m, i, :}
                \gets
                ({\tilde{v}}_{0})_m
                +
                (\tau^\supnum{q}_{i, :})^{1-\gamma}
                (\tilde{I}^\supnum{q}_\mathrm{b}+\tilde{I}^\supnum{q}_\mathrm{c})
            $\;
              }
          }
        }%
        \textbf{Return:}
        $y^{q,\eps}_m(t) \gets
        t^{\gamma-1}  \{S_\mathcal{A}^q {A}^\supnum{q}_{m, :, :}\}(t)$ 
\caption{Pseudocode of Hilfer fractional derivative splines approximation procedure.
}
\label{alg:pseudocode}
\end{algorithm}

In Algorithm \ref{alg:pseudocode}, we present the numerical procedure to to obtain solution approximations as in Theorem~\ref{thm:hilf_splines}.
A practical consideration is the method used to evaluate multiplication and function compositions $\bf{f}$ on splines.
In our implementation, we analytically compute splines multiplication.
This method has the benefit of not introducing additional approximation errors for multiplicative operations.
However, one downside to this method is that the product of $n$ splines of order $q$ yields a resulting spline of order $nq=:q'$. 
In the pseudocode of Algorithm \ref{alg:pseudocode}, we write $S^{q'}_\mathcal{A} \{ g * A\}$ for the product of a spline $g$ with one with coefficients $A$, resulting in a spline of order $q'$.
To prevent the spline order from increasing with every timestep, we implement a downscaling to order $q$ by evaluating the fractional integral at time evaluation points $\tau^\supnum{q}$. 
Hence, the resulting values correspond to the coefficient values for an order $q$ spline.
This is carried out in line \ref{line:I_b} and \ref{line:I_c} of the pseudocode, where the previous and current knot integral contributions are computed as $\tilde{I}_\mathrm{b}^\supnum{q}$ and $\tilde{I}_\mathrm{c}^\supnum{q}$ respectively, yielding an order $q$ spline at knot $k$.

In line \ref{line:conv}, \texttt{CONV} denotes a convergence criterion, which can be seen as the condition for truncation of the limit in the components of $\bf{n}$ in the analytical proof.
In practice, we can choose for instance a fixed number of iterations, or a maximum change $\eps_\textrm{it}$ in spline coefficients.
This results in a termination of Picard iterates at $n$ when 
$\sup_{j \in \{0, \ldots q\}} |(A_{m, i, j})_n - (A_{m, i,j})_{n-1}| < \eps_\textrm{it}$.
Finally, we remark that as the number of knots $k$ grows, the previous integral contribution calculations of lines \ref{line:A_b} - \ref{line:I_b} will dominate, assuming the number of iterations needed for convergence is independent of $k$.
{This leads to a time complexity of $\mathcal{O}(k^2)$, since for every knot $i \in \{0, \ldots, k\}$, the fractional integral is computed over values of the $i$ previous knots.
This highlights the ``memory effect'' of fractional differential equations.}
\section{Numerical results}
\label{sec:num_res}
\subsection{Polynomial example and convergence results}
\label{sec:num_res_conv}
To validate the analytical convergence results of Theorem \ref{thm:hilf_splines}, a numerical example is implemented for the following FIVP:
\begin{align}
    \begin{cases}
        \hilfder{0}{t}{\alpha, \beta}
        y(t) 
        =
        t^{k},
        \quad &t>0,
        \quad 0<\alpha<1,
        \quad 0\leq \beta \leq 1,
        \quad k>0,
        \\
        \rlint{0}{t}{1-\gamma}y(0)
        =
        \tilde{y}_0,
        \quad 
        &\tilde{y}_0 \in \mathbb{R}
        .
        \label{eq:num_pol_sys}
    \end{cases}
\end{align}
We take $\alpha, \beta = 1/2$, $k = 0.9$, $\tilde{y}_0 = 1$.
Approximation errors and convergence results are presented in Figure~\ref{fig:pol_num_res} and Table~\ref{tab:pol_h}~-~\ref{tab:pol_eps}.
\begin{figure}
    \centering
    \begin{subfigure}[t]{1\linewidth}        \includegraphics[width=\linewidth]{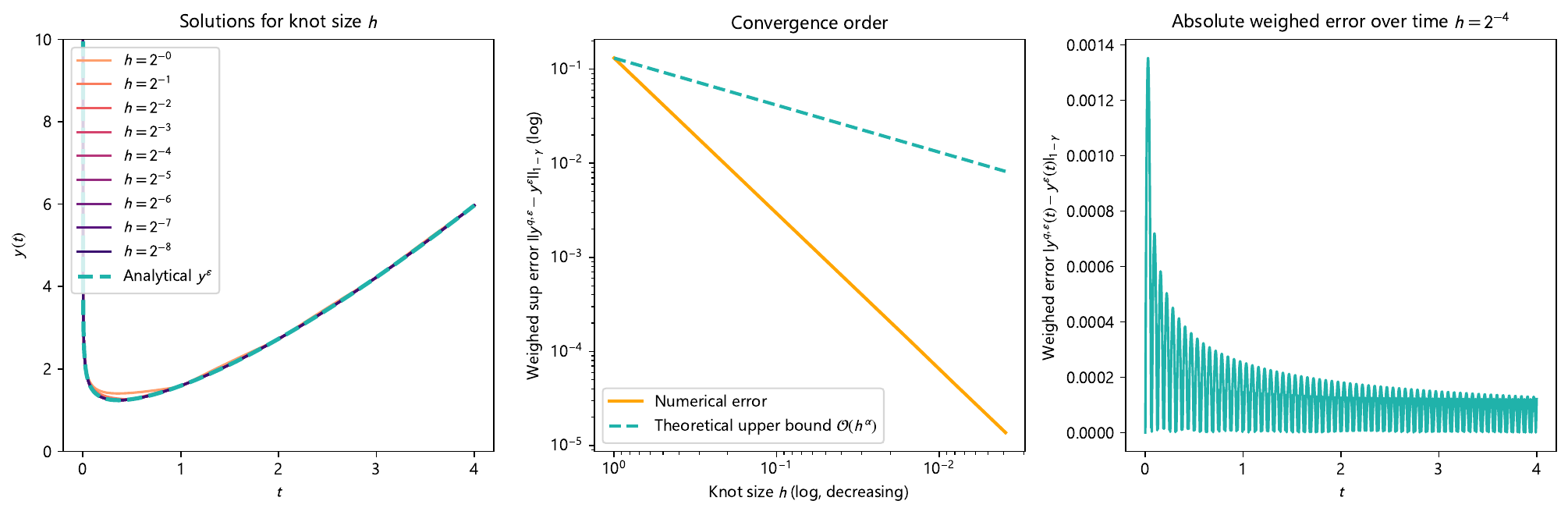}
        \caption{Convergence results in decreasing knot size $h$ with $q = 1$, $\eps = 10^{-10}.$}
    \end{subfigure}
    \begin{subfigure}[t]{1\linewidth}
        \includegraphics[width=\linewidth]{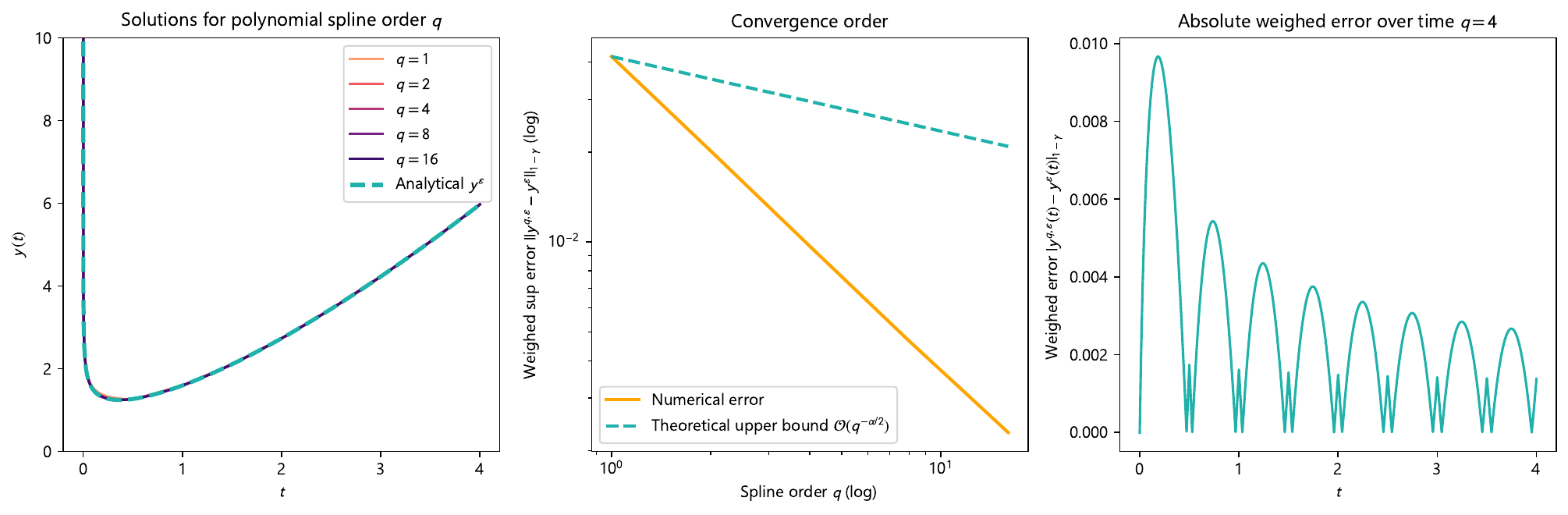}
        \caption{Convergence results in increasing polynomial order $q$ with $h = 1/2$, $\eps = 10^{-10}.$}
    \end{subfigure}
    \begin{subfigure}[t]{1\linewidth}
        \includegraphics[width=\linewidth]{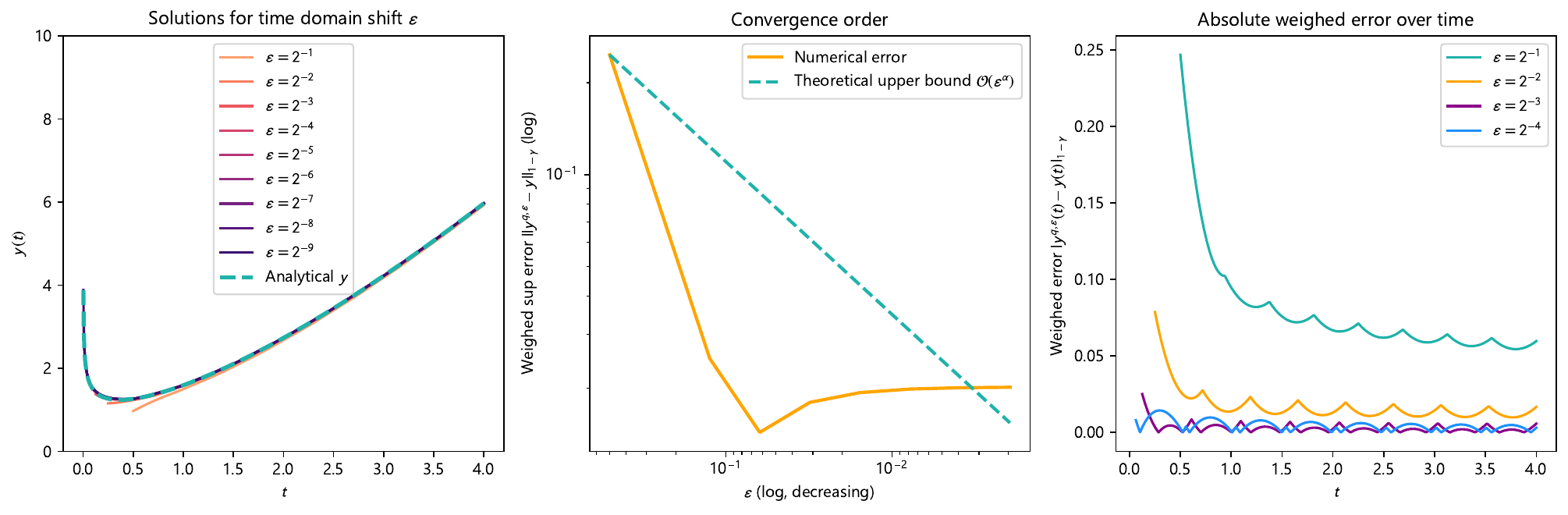}
        \caption{Convergence results in decreasing $\eps$ with $q = 2$, $h = 1/2$.}
    \end{subfigure}
    \caption{Results of convergence in $q$, $h$ and $\eps$ for system \eqref{eq:num_pol_sys} with $\alpha, \beta = 0.5$, $k=0.9$, $\tilde{y}_0 = 1$.
    }
    \label{fig:pol_num_res}
\end{figure}

\begin{table}[h]
    \centering
    \begin{tabular}{c|c|c|c}
         $h$ & mean weighted error & sup weighted error & total time (s) \\ \hline
$2^{-0}$ &$ 4.747 \cdot 10^{-2 }$& $1.312 \cdot 10^{-1}$ & $6.082 \cdot 10^{-3}$ \\
$2^{-1}$ &$ 1.123 \cdot 10^{-2 }$& $4.181 \cdot 10^{-2}$ & $2.622 \cdot 10^{-3}$ \\
$2^{-2}$ &$ 2.639 \cdot 10^{-3 }$& $1.332 \cdot 10^{-2}$ & $3.657 \cdot 10^{-3}$ \\
$2^{-3}$ &$ 6.164 \cdot 10^{-4 }$& $4.244 \cdot 10^{-3}$ & $4.793 \cdot 10^{-3}$ \\
$2^{-4}$ &$ 1.433 \cdot 10^{-4 }$& $1.352 \cdot 10^{-3}$ & $8.702 \cdot 10^{-3}$ \\
$2^{-5}$ &$ 3.322 \cdot 10^{-5 }$& $4.309 \cdot 10^{-4}$ & $1.474 \cdot 10^{-2}$ \\
$2^{-6}$ &$ 7.692 \cdot 10^{-6 }$& $1.373 \cdot 10^{-4}$ & $2.949 \cdot 10^{-2}$ \\
$2^{-7}$ &$ 1.781 \cdot 10^{-6 }$& $4.316 \cdot 10^{-5}$ & $5.849 \cdot 10^{-2}$ \\
$2^{-8}$ &$ 4.403 \cdot 10^{-7 }$& $1.375 \cdot 10^{-5}$ & $1.143 \cdot 10^{-1}$ \\
\hline
    \end{tabular}
    \caption{Convergence results for system \eqref{eq:num_pol_sys} in decreasing knot size $h$ with $q = 1$, $\eps = 10^{-10}.$}
    \label{tab:pol_h}
\end{table}
\begin{table}[h]
    \centering
    \begin{tabular}{c|c|c|c}
$q$ & mean weighted error & sup weighted error & total time (s) \\ \hline
$1$ &$ 1.123 \cdot 10^{-2 }$& $4.181 \cdot 10^{-2}$ & $7.325 \cdot 10^{-3}$ \\
$2$ &$ 5.424 \cdot 10^{-3 }$& $2.018 \cdot 10^{-2}$ & $2.944 \cdot 10^{-3}$ \\
$4$ &$ 2.671 \cdot 10^{-3 }$& $9.670 \cdot 10^{-3}$ & $4.752 \cdot 10^{-3}$ \\
$8$ &$ 1.331 \cdot 10^{-3 }$& $4.660 \cdot 10^{-3}$ & $3.375 \cdot 10^{-3}$ \\
$16$ &$ 6.675 \cdot 10^{-4 }$& $2.298 \cdot 10^{-3}$ & $4.713 \cdot 10^{-3}$ \\
\hline
    \end{tabular}
    \caption{Convergence results for system \eqref{eq:num_pol_sys} in increasing polynomial order $q$ with $h = 1/2$, $\eps = 10^{-10}$.}
    \label{tab:pol_q}
\end{table}
\begin{table}[h]
    \footnotesize
    \centering
    \begin{tabular}{c|c|c|c|c}
$\varepsilon$ & mean weighted error & sup weighted error & total time (s) & $x^{q,\varepsilon}(\varepsilon)$ \\ \hline
$2^{-1}$ &$ 2.467 \cdot 10^{-1 }$& $2.467 \cdot 10^{-1}$ & $6.505 \cdot 10^{-3}$ & $9.705 \cdot 10^{-1}$ \\
$2^{-2}$ &$ 7.861 \cdot 10^{-2 }$& $7.861 \cdot 10^{-2}$ & $2.621 \cdot 10^{-3}$ & $1.154 \cdot 10^{0}$ \\
$2^{-3}$ &$ 2.505 \cdot 10^{-2 }$& $2.505 \cdot 10^{-2}$ & $3.290 \cdot 10^{-3}$ & $1.372 \cdot 10^{0}$ \\
$2^{-4}$ &$ 1.435 \cdot 10^{-2 }$& $1.435 \cdot 10^{-2}$ & $2.544 \cdot 10^{-3}$ & $1.632 \cdot 10^{0}$ \\
$2^{-5}$ &$ 1.800 \cdot 10^{-2 }$& $1.800 \cdot 10^{-2}$ & $2.485 \cdot 10^{-3}$ & $1.941 \cdot 10^{0}$ \\
$2^{-6}$ &$ 1.937 \cdot 10^{-2 }$& $1.937 \cdot 10^{-2}$ & $2.486 \cdot 10^{-3}$ & $2.308 \cdot 10^{0}$ \\
$2^{-7}$ &$ 1.990 \cdot 10^{-2 }$& $1.990 \cdot 10^{-2}$ & $2.875 \cdot 10^{-3}$ & $2.745 \cdot 10^{0}$ \\
$2^{-8}$ &$ 2.010 \cdot 10^{-2 }$& $2.010 \cdot 10^{-2}$ & $2.557 \cdot 10^{-3}$ & $3.264 \cdot 10^{0}$ \\
$2^{-9}$ &$ 2.017 \cdot 10^{-2 }$& $2.017 \cdot 10^{-2}$ & $3.135 \cdot 10^{-3}$ & $3.882 \cdot 10^{0}$ \\
\hline
    \end{tabular}
    \caption{Convergence results for system \eqref{eq:num_pol_sys} in decreasing $\eps$ with $q = 2$, $h = 1/2$.}
    \label{tab:pol_eps}
\end{table}

Results demonstrate convergence orders satisfying the upper bounds of Theorem \ref{thm:hilf_splines} in polynomial order $q$, knot size $h$ and time-shift $\eps$, as can be seen in Figure~\ref{fig:pol_num_res}.
Convergence is in general faster compared to the theoretical upper bound, possibly caused by higher regularity of solutions.
In the convergence results of $h$ and $q$, the error relative to $y^\eps$ can be seen to attain minima at knot intersections, with higher errors inside the knot intervals.
We note that this corresponds with the theoretical results of e.g. Remark \ref{rem:bernstein_endpoints}.

Moreover, for convergence in $q$, we remark that higher values do not converge due to numerical stability issues, likely caused by Runge's Phenomenon (see e.g. \cite{Schumaker2007}).
Increasing $q$ when possible however does provide a practical benefit.
As can be seen in Table~\ref{tab:pol_q}, the increase of polynomial order $q$ does not lead to an increase in total run time, contrary to reducing the knot size $h$ (Table~\ref{tab:pol_h}).
We conjecture that this is due to the fact that the polynomial basis function effects can be computed in parallel, operationalized in implementation by the tensor product vectorization setup as described in Section \ref{sec:implementation}.  
This provides a promising avenue for computational speedup of the algorithm in practical use-cases.

For the highest values of $\eps$, the error at $t\downarrow \eps$ dominates, decreasing when decreasing $\eps$ (Table~\ref{tab:pol_eps}).
In these cases, the errors appear to be highest at knot intersections, contrary to the other examples.
From $\eps = 2^{-4}$ however, lowering $\eps$ does not appear to yield a significant improvement in error metric as the error caused by splines approximations appears to dominate over the error introduced by time-shifting $\eps$.
We note that this is an indication that for most practical problems, further lowering $\eps$ will not yield a significant improvement in the overall error and behavior of the solution.
However, in cases where $y^{q, \eps}(\eps)$ is required to be ``closer'' to the limiting case of singular behavior at $t \downarrow 0$, it can still be desirable to choose a lower value of $\eps$. 

\subsection{Comparison with Predictor-Corrector method}
\label{sec:num_res_diethelm}
We compare our splines method with the commonly used Adams–\allowbreak~Bashforth–\allowbreak~Moulton Predictor-Corrector method of \citet{Diethelm2002}.
Since this method is only suited for the Caputo case, we consider only $\beta=1$.
Take the following fractional system:
\begin{align}
    \begin{cases}
        \label{eq:diethelm_sys_lin_a}
        \hilfder{0}{t}{\alpha, 1} x(t)
        =
        a x(t),
        \quad &t>0,
        \quad 0<\alpha<1,
        \quad a\in \mathbb{R},
        \\
        x(0) = {\tilde{x}_0},
        \quad &{\tilde{x}_0}\in\mathbb{R}
        .
    \end{cases}
\end{align}
This system has the known analytical solution $x(t) = \tilde{x}_0 \mathrm{E}_\alpha(at^\alpha)$,
where $\mathrm{E}_\alpha(z) = \sum_{j=0}^\infty \fr{z^j}{\Gamma(j\alpha+1)}$ denotes the Mittag-Leffler function.
In this section, we will take $\alpha = 0.5, a = -1$ and $\tilde{x}_0 = 1$.

\begin{figure}[h]
    \centering
    \includegraphics[width=1\linewidth]{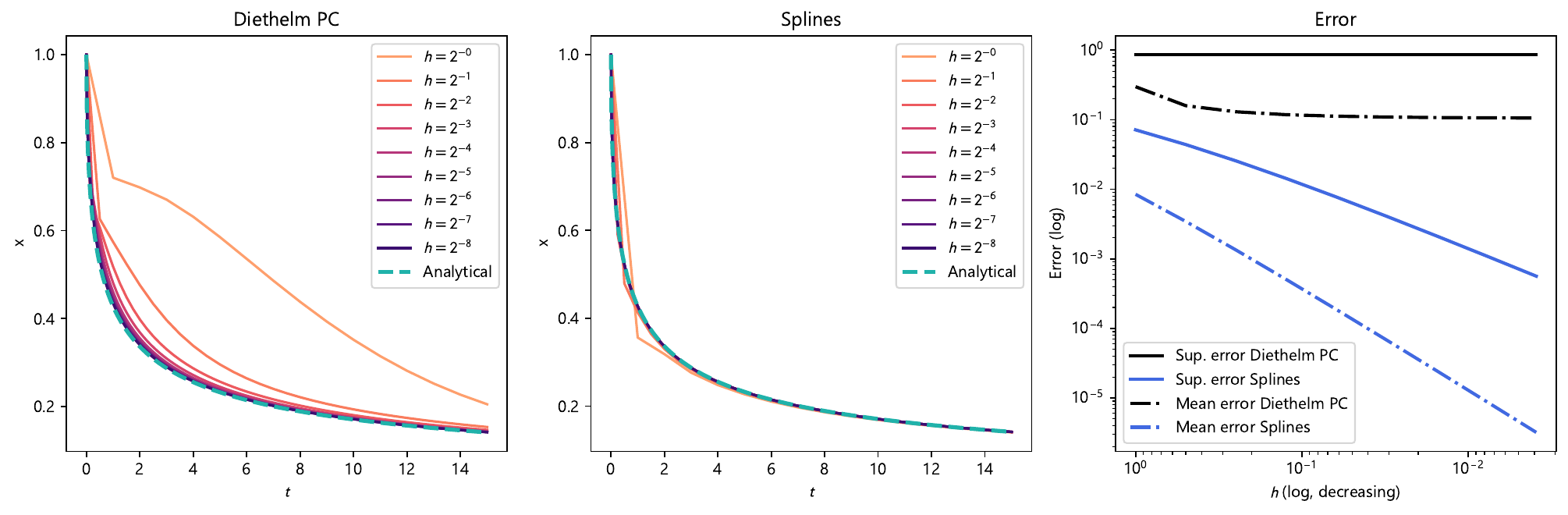}
    \caption{Comparison of approximations of the predictor-corrector method (left) with our Bernstein splines setup (middle) and respective errors (right) for decreasing knot size values $h$ for system \eqref{eq:diethelm_sys_lin_a}.}
    \label{fig:diethelm_h}
\end{figure}

\begin{table}[h]
    \centering
    \scriptsize
    \begin{tabular}{c||c|c|c||c|c|c}
$h$ & mean error PC & sup error PC & time PC (s) & mean error BS & sup error BS & time BS (s)  \\ \hline
$2^{-0}$ & $2.986 \cdot 10^{-1 } $ & $8.588 \cdot 10^{-1 } $ & $5.370 \cdot 10^{-4 } $ & $8.465 \cdot 10^{-3 } $ & $7.154 \cdot 10^{-2 } $ & $4.786 \cdot 10^{-2 } $\\
$2^{-1}$ & $1.594 \cdot 10^{-1 } $ & $8.588 \cdot 10^{-1 } $ & $9.712 \cdot 10^{-4 } $ & $3.452 \cdot 10^{-3 } $ & $4.400 \cdot 10^{-2 } $ & $4.165 \cdot 10^{-2 } $\\
$2^{-2}$ & $1.302 \cdot 10^{-1 } $ & $8.588 \cdot 10^{-1 } $ & $2.562 \cdot 10^{-3 } $ & $1.346 \cdot 10^{-3 } $ & $2.567 \cdot 10^{-2 } $ & $5.640 \cdot 10^{-2 } $\\
$2^{-3}$ & $1.183 \cdot 10^{-1 } $ & $8.588 \cdot 10^{-1 } $ & $4.171 \cdot 10^{-3 } $ & $5.088 \cdot 10^{-4 } $ & $1.437 \cdot 10^{-2 } $ & $7.901 \cdot 10^{-2 } $\\
$2^{-4}$ & $1.124 \cdot 10^{-1 } $ & $8.588 \cdot 10^{-1 } $ & $7.240 \cdot 10^{-3 } $ & $1.884 \cdot 10^{-4 } $ & $7.787 \cdot 10^{-3 } $ & $1.233 \cdot 10^{-1 } $\\
$2^{-5}$ & $1.093 \cdot 10^{-1 } $ & $8.588 \cdot 10^{-1 } $ & $1.553 \cdot 10^{-2 } $ & $6.880 \cdot 10^{-5 } $ & $4.123 \cdot 10^{-3 } $ & $2.152 \cdot 10^{-1 } $\\
$2^{-6}$ & $1.075 \cdot 10^{-1 } $ & $8.588 \cdot 10^{-1 } $ & $3.722 \cdot 10^{-2 } $ & $2.488 \cdot 10^{-5 } $ & $2.146 \cdot 10^{-3 } $ & $3.786 \cdot 10^{-1 } $\\
$2^{-7}$ & $1.065 \cdot 10^{-1 } $ & $8.588 \cdot 10^{-1 } $ & $7.533 \cdot 10^{-2 } $ & $8.934 \cdot 10^{-6 } $ & $1.104 \cdot 10^{-3 } $ & $6.338 \cdot 10^{-1 } $\\
$2^{-8}$ & $1.058 \cdot 10^{-1 } $ & $8.588 \cdot 10^{-1 } $ & $1.441 \cdot 10^{-1 } $ & $3.194 \cdot 10^{-6 } $ & $5.631 \cdot 10^{-4 } $ & $1.451 \cdot 10^{0 } $\\
\hline
    \end{tabular}
    \caption{Errors of the predictor-corrector method (PC) and our Bernstein splines setup (BS) for decreasing knot size values $h$ for system \eqref{eq:diethelm_sys_lin_a}.}
    \label{tab:diethelm_h}
\end{table}

First, we obtain results when decreasing knot size $h$ in an equidistant grid for $0\leq t \leq 15$.
Results are presented in Figure~\ref{fig:diethelm_h} and Table~\ref{tab:diethelm_h}.
We note that the predictor-corrector method can be seen as an order one splines method.
Hence, we choose $q=1$ for comparison.
Errors are calculated at knot points.
The analytical Mittag-Leffler solution is computed up to $500$ summation terms, and an iteration convergence tolerance of $10^{-12}$ is chosen.
Since we are dealing with the Caputo case, we can take $\eps=0$ for the time domain.

Results show a lower error for the splines method as compared to the predictor-corrector method for every choice of $h$.
Moreover, the supremum error in the predictor-corrector does not decrease further when decreasing $h$ in our setup.
Graphically, this can be explained by a persistent ``bias'' in function approximation of the predictor-corrector step, yielding a higher value of numerical solutions as compared to the analytical solutions.
We conjecture that this effect is mitigated in the splines setup since the fractional integral over past values is not approximated by numerical integration rules, but carried out analytically over the splines function.
However, the predictor-corrector approach does outperform the splines setup for the same values of $h$ in terms of computational time required.

\begin{table}[h]
    \centering
    \begin{tabular}{c|c|c|c}
& mean error & sup error & total time (s) \\ \hline
Diethelm PC & $1.112 \cdot 10^{-1 }$ & $8.588 \cdot 10^{-1 }$ & $9.347 \cdot 10^{-3}$ \\ \hline
Splines $N = 1$ &$ 1.366 \cdot 10^{-3 }$& $4.269 \cdot 10^{-2}$ & $9.136 \cdot 10^{-2}$ \\
Splines $N = 2$ &$ 1.083 \cdot 10^{-4 }$& $7.388 \cdot 10^{-3}$ & $1.102 \cdot 10^{-1}$ \\
Splines $N = 3$ &$ 1.665 \cdot 10^{-4 }$& $6.355 \cdot 10^{-3}$ & $1.127 \cdot 10^{-1}$ \\
Splines $N = 4$ &$ 1.313 \cdot 10^{-4 }$& $6.360 \cdot 10^{-3}$ & $1.290 \cdot 10^{-1}$ \\
Splines $N = 5$ &$ 1.373 \cdot 10^{-4 }$& $6.360 \cdot 10^{-3}$ & $1.136 \cdot 10^{-1}$ \\
Splines $N = 6$ &$ 1.363 \cdot 10^{-4 }$& $6.360 \cdot 10^{-3}$ & $1.300 \cdot 10^{-1}$ \\
Splines $N = 7$ &$ 1.364 \cdot 10^{-4 }$& $6.360 \cdot 10^{-3}$ & $1.407 \cdot 10^{-1}$ \\
Splines $N = 8$ &$ 1.364 \cdot 10^{-4 }$& $6.360 \cdot 10^{-3}$ & $1.555 \cdot 10^{-1}$ \\
Splines $N = 9$ &$ 1.364 \cdot 10^{-4 }$& $6.360 \cdot 10^{-3}$ & $1.775 \cdot 10^{-1}$ \\
Splines $N = 10$ &$ 1.364 \cdot 10^{-4 }$& $6.360 \cdot 10^{-3}$ & $1.877 \cdot 10^{-1}$ \\
\hline
    \end{tabular}
    \caption{Errors of the predictor-corrector method and our Bernstein splines setup when increasing the number of knot iterations $N$ for system \eqref{eq:diethelm_sys_lin_a}.}
    \label{tab:diethelm_N}
\end{table}

Finally, we remark that the predictor-corrector setup can be seen as a two-step iteration method.
This approach is notably different from the results as above, where the Bernstein splines method is iterated $N$ times per knot until the convergence criterion is satisfied.
To showcase the error for a restricted number of iterations $N$ per knot, we fix $h=0.05$ and obtain results when increasing $N$.
Results are presented in Table~\ref{tab:diethelm_N}.
We observe that even with only one integration iteration, the obtained errors are significantly lower using the splines method as compared to the predictor-corrector method.
Moreover, the error stabilizes fast, yielding no large change in error after only a few iterations.
Computation times are notably lower for the predictor-corrector method, although a similar error level cannot be reached.
\subsection{Fractional Van der Pol oscillator}
\label{sec:num_res_vdp}
Consider now the Hilfer-fractional Van der Pol oscillator:
\begin{align}
    \ddot{x}(t)
    -
    \mu (1-x(t)^2) 
    \hilfder{0}{t}{\alpha, \beta} x(t)
    +
    x(t) 
    =0,
    \quad \mu > 0.
    \label{eq:vdp_one_eq}
\end{align}
To write this in a system notation satisfying \eqref{eq:ivp}, note that by the component-wise equivalent integral equation \eqref{eq:eq_int_eq} and the semigroup property from Proposition \ref{prop:rlint_semigroup}, we can write \eqref{eq:vdp_one_eq} as a combination of fractional derivatives with $0 <\alpha < 1$.
Take $\alpha =1/2$, hence  $\gamma = \alpha+1/2-1/2 \beta$.
Then, taking:
$\bf{x}(t) = (x(t), y(t), z(t), u(t))$,
gives the following FIVP  for $\tilde{x}_0  \in \mathbb{R}$:
\begin{align}
\label{eq:res_vdp_sys}
     \hilfder{0}{t}{1/2, \,\beta} 
     \bf{x}(t)
     =
     \begin{pmatrix}
         y(t)\\
         z(t)\\
         u(t)\\
          \mu (1-x(t)^2) y(t) - x(t)
     \end{pmatrix},
     \quad
     \rlint{0}{t}{1-\gamma} \bf{x}(0)
        =
        \begin{pmatrix}
         \tilde{x}_0\\
         0\\
         0\\
         0
     \end{pmatrix}.
\end{align}
System \eqref{eq:res_vdp_sys} is then approximated using the Hilfer-derivative method of Section \ref{sec:implementation}.
A time-shift value of $\eps = 10^{-5}$ is chosen by empirical convergence requirements, as will be discussed below.
Knots are constructed so that the knot size $h_i:= t_{i+1} - t_i$ satisfies
\begin{align}
    \label{eq:vdp_knot_sel}
    h_i 
    =
    \begin{cases}
    \min\left(h_{\textrm{max}},\, c^{1/(1-\gamma)-1}t_i\right),
    \quad     &\text{if} \quad 0\leq \gamma < 1,
    \\
    h_{\textrm{max}} \quad &\text{if} \quad \gamma = 1,
    \end{cases}
\end{align}
for $i\in\{0, ..., k\}$.
This choice of $h_i$ ensures the uniform knot convergence requirements of Theorem \ref{thm:hilf_splines} for $\left({1+{h_i}/{t_i}}\right)^{1-\gamma} \leq c$.
For implementation, values of $c = \fr{3}{2}$ and $h_{\textrm{max}} = 0.05$ are chosen, motivated by plotting resolution.
Since the highest order of spline multiplication in system \eqref{eq:res_vdp_sys} is an order $3$ multiplication for $x^2(t)y(t)$, Bernstein splines of order $q$ can be implemented with an intermediate computational order $q' = 3q$.
Simulations are run using $q = 1$ for $T = 100$ and ${\tilde{x}}_0 = 1$ using various values of $\beta \in [0,1]$.
Run statistics are presented in Table \ref{tab:res_vdp_runs} with a graphical overview of results in Figure \ref{fig:res_vdp}.

\begin{table}[!htbp]
    \centering
    \footnotesize
    \begin{tabular}{c|c|c|c|c|c}
$\beta$ & knots & $x(\eps)$ & total time (s) &  avg. it. per knot & avg. time per it. (s) \\ \hline
$0.00$ & 2010 & $1.784 \cdot 10^2 $& $7.187$ & $20.763$&$ 1.722 \cdot 10^{-4 }$\\
$0.25$ & 2007 & $5.227 \cdot 10^1 $& $8.390$ & $20.418$&$ 2.047 \cdot 10^{-4 }$\\ 
$0.50$ & 2005 & $1.451 \cdot 10^1 $& $6.679$ & $20.063$&$ 1.660 \cdot 10^{-4 }$\\
$0.75$ & 2002 & $3.870 \cdot 10^0 $& $6.655$ & $19.755$&$ 1.683 \cdot 10^{-4 }$\\ 
$1.00$ & 2000 & $1.000 \cdot 10^0 $& $7.652$ & $19.478$&$ 1.964 \cdot 10^{-4 }$\\ \hline
    \end{tabular}
    \caption{Run statistics for simulations of the fractional Van der Pol oscillator \eqref{eq:res_vdp_sys} for $T = 100$.}
    \label{tab:res_vdp_runs}
\end{table}
\begin{figure}
    \begin{subfigure}[t]{0.420\linewidth}
        \centering
        \includegraphics[width=\linewidth]{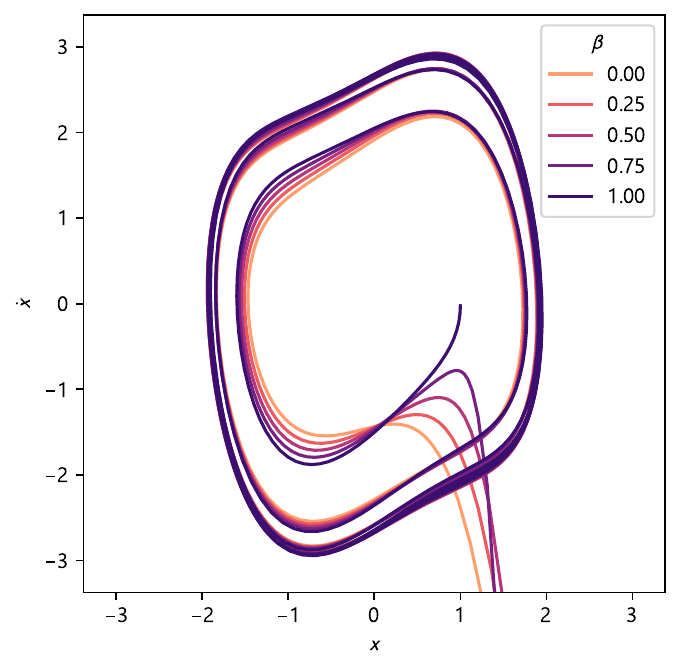}
        \caption{Phase portrait $(T=100)$}
    \end{subfigure}
    \begin{subfigure}[t]{0.580\linewidth}
        \centering
        \includegraphics[width=\linewidth]{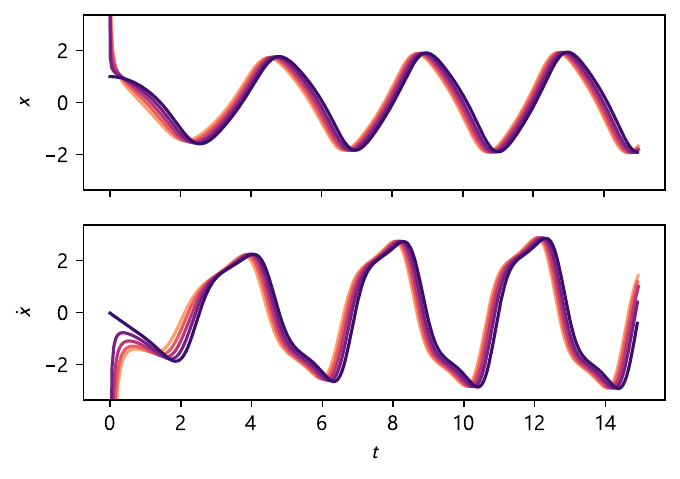}
        \caption{Solution $x$ and derivative $\dot{x}$ $(T=15)$.}
    \end{subfigure}
    \caption{Results for the Hilfer-derivative fractional Van der Pol oscillator \eqref{eq:res_vdp_sys} for ${\tilde{x}}_0 = 1$, $\alpha = 1/2$ and various $\beta$.}
    \label{fig:res_vdp}
\end{figure}

\newpage
Results show convergence for the numerical method with a consistently about $20$ iterations needed per knot across all values of $\beta$, indicating that the knot selection \eqref{eq:vdp_knot_sel} provides satisfactory convergent results. 
Obtained run times are similar, indicating minimal performance difference when dealing with the non-Caputo cases.
Finally, we note that a higher value of $x(\eps)$ can be obtained by lowering the value of $\eps > 0$.
However, we found empirically that raising $\eps$ can cause numerical divergence for low values of $\beta$, possibly due to rounding errors in the $t^{\gamma-1}$ compositions.
We note that for the fractional Van der Pol system, the overall solution behavior was observed to be unaffected when lowering $\eps$ whenever possible.

In terms of solution behavior and stability, Figure \ref{fig:res_vdp} shows solutions converging to the same limit cycles independent of Hilfer derivative type $\beta$, indicating strong attraction to the limit cycle.
Viewing the Hilfer-derivative setting as a singular, decaying perturbation at $t=0$ of the Caputo-derivative system, this convergence to a unique limit cycle corresponds to semi-analytical results as obtained by \cite{Liu2016vibcont}.
\section{Conclusions}
\label{sec:conclusion}
In this paper, we present a numerical Bernstein splines technique to obtain solutions to nonlinear Hilfer-derivative FIVPs of the form of \eqref{eq:ivp}.
The approximations are constructed using Picard iterations on each knot of the locally supported spline polynomials.
Analytical convergence requirements of the constructed approximations are presented in Theorem~\ref{thm:hilf_splines} and asymptotic convergence rates to the analytical solutions of \eqref{eq:ivp} are presented in Theorem~\ref{thm:convergence_rates}.
The approximations are obtained for $t>\eps$ in the weighted continuous space $C_{1-\gamma}[\eps, T]$, converging to the analytical solution as $\eps \downarrow 0$.
To deal with the singular behavior of solutions as $t\downarrow0$, a transformation is made to the weighted space, transforming back after convergence.

We present a vectorized implementation, providing improved numerical performance and parallelization in polynomial order $q$.
Numerical experiments show convergence rates corresponding to the analytical results.
Moreover, our method results in significantly lower errors compared to the commonly used Adams–Bashforth–Moulton predictor-corrector method of \citet{Diethelm2002}, even for large knot sizes and a low number of Picard iterations.
Finally, we demonstrate the applicability of our method to a nonlinear example, simulating solutions of the Hilfer-fractional Van der Pol oscillator.

For future work, we note that our splines method, although significantly more accurate, is still more computationally demanding compared to the predictor-corrector method.
More avenues could be explored to make the splines method implementation more efficient.
Additionally, different spline basis functions can be explored, possibly extending the method to implementations in (fractionally) differentiable solution spaces.
Moreover, we wish to extend the method from initial to more general boundary value problems, both in an ordinary FDE-setting and with applications to FPDEs.
Here, the splines structure could possibly be utilized to construct a space-like domain.
\section*{Data availability}
Code and data is available on \url{https://github.com/ngoedegebure/fractional_splines}.
The code used in Section \ref{sec:num_res} can be found under \texttt{/examples/IVP-paper/} .
\bibliographystyle{unsrtnat} 
\bibliography{references.bib}

\end{document}